\newcommand{\texorpdfstring}[2]{#1}   
\newcommand{\url}[1]{#1} 
\definecolor{gray}{rgb}{0.2,0.2,.2}
\newcommand{\bpm}{\begin{pmatrix}}
\newcommand{\epm}{\end{pmatrix}}
\def\longrightharpoonup{
\relbar\joinrel\joinrel\relbar\joinrel\joinrel\relbar\joinrel\joinrel\rightharpoonup}
\newcommand{\xrightharpoonup}[1]{\stackrel{#1}{\longrightharpoonup}}
\newcommand{\fspace}[1]{{\mathsf{#1}}}
\newcommand{\fspaceL}{\fspace{L}}
\newcommand{\Rset}{{\mathbb{R}}}
\newcommand{\Nset}{{\mathbb{N}}}
\newcommand{\oointerval}[2]{(#1,\,#2)}%
\newcommand{\ccinterval}[2]{[#1,\,#2]}%
\newcommand{\even}{{\rm \,even}}
\newlength{\mhpicDwidth}
\newlength{\mhpicDvsep}
\newlength{\mhpicDhsep}
\newlength{\mhpicPwidth}
\newlength{\mhpicPvsep}
\newlength{\mhpicPhsep}
\newlength{\mhpicWhsep}
\newcommand{\pair}[2]{{\left({#1},\,{#2}\right)}}
\newcommand{\at}[1]{{\left({#1}\right)}}
\newcommand{\nat}[1]{(#1)}
\newcommand{\bat}[1]{{\big(#1\big)}}
\newcommand{\Bat}[1]{{\Big(#1\Big)}}
\newcommand{\ul}[1]{\underline{#1}}
\newcommand{\norm}[1]{\left\|{#1}\right\|}
\newcommand{\abs}[1]{\left|{#1}\right|}
\newcommand{\babs}[1]{\big|{#1}\big|}
\newcommand{\dint}[1]{\,\mathrm{d}#1}
\newcommand{\al}{{\alpha}}
\newcommand{\be}{{\beta}}
\newcommand{\ga}{{\gamma}}
\newcommand{\eps}{{\varepsilon}}
\newcommand{\calA}{\mathcal{A}}
\newcommand{\calB}{\mathcal{B}}
\newcommand{\calF}{\mathcal{F}}
\newcommand{\calL}{\mathcal{L}}
\newcommand{\calM}{\mathcal{M}}
\newcommand{\calN}{\mathcal{N}}
\newcommand{\calO}{\mathcal{O}}
\newcommand{\calP}{\mathcal{P}}
\newcommand{\calQ}{\mathcal{Q}}
\newtheorem{theorem}{Theorem}[section]
\newtheorem{lemma}[theorem]{Lemma}
\newtheorem{cor}[theorem]{Corollary}
\newtheorem{proposition}[theorem]{Proposition}
\newtheorem{ass}[theorem]{Assumption}
\newtheorem{result}[theorem]{Main result}
\begin{document}
%
%
%
\title{Korteweg-de Vries waves in peridynamical media}%
\date{May 22, 2023}
\author{%
Michael Herrmann\footnote{Technische Universit\"at Braunschweig, Germany, {\tt michael.herrmann@tu-braunschweig.de}}\and%
Katia Kleine\footnote{Technische Universit\"at Braunschweig, Germany, {\tt k.kleine@tu-braunschweig.de}}%
}
\maketitle
%
%
%
\begin{abstract}
We consider a one-dimensional peridynamical medium and show the existence of solitary waves with small amplitudes and long wavelength. Our proof uses nonlinear Bochner integral operators and characterizes their asymptotic properties in a singular scaling limit.
\end{abstract}
%
%
%
%
%
\setcounter{tocdepth}{3}
\setcounter{secnumdepth}{3}{\scriptsize{\tableofcontents}}
%
%
%
\section{Introduction}\label{sect:intro}
%
Peridynamics is a nonlocal theory which provides an alternative approach to problems in solid \mbox{mechanics} and replaces the partial differential equations of the classical theory by integro-differential equations that do not involve spatial derivatives, see for instance \cite{SL00}. The internal forces between different material points are described by pairwise interactions similar to nonlinear springs and thus there exists, at least in one space dimension, a close connection to discrete atomistic models such as Fermi-Pasta-Ulam-Tsingou chains (FPUT) with nearest neighbor interactions.
\par
Ever since the seminal paper \cite{FPU55} there has been an ongoing interest in the propagation of traveling waves within atomistic or related systems. A concise and very readable summary of both the existing literature and the current state of research can be found in the review article  \cite{V22}. Traveling waves in peridynamical media are studied in \cite{DB06, SL16} numerically and \cite{PV18,HM19a} establish the existence of solitary waves with large amplitudes by means of two different but related variational methods. 
\par
In this paper we show the existence of Korteweg-deVries (KdV) waves with small amplitudes and generalize similar asymptotic results for various types of lattices.  \cite{ZK65} established the existence of KdV waves in FPUT chains using formal asymptotic analysis and the first rigorous existence proof has been given in the first part of the four-part series of papers \cite{FPI99, FPII02, FPIII04, FPIV04}, while the three other parts deal with the nonlinear orbital stability of those waves.  Periodic waves have been studied in \cite{FML14} while \cite{HML16} concerns chains with more than nearest neighbor interactions. More analytical results on the stability problem can be found in \cite{Miz11,Miz13,HW13,XCKV13} and \cite{FM03,CH18} characterize KdV waves in two-dimensional FPUT lattices.
\par
The existence of KdV-type waves has also been proven for dimer chains, in which the masses and/or the spring constants alternate between two values, as well as for mass-in-mass systems, where each particle interacts additionally with an internal resonator. The results in \cite{HW17,FW18,Fav20,FH20,Fav21,FH23} imply under certain generic conditions the existence of 
wave solutions in which an underlying KdV soliton is superimposed by periodic ripples that are either small (micropterons) or extremely small (nanopterons). See also \cite{KVSD13, XKS15, KSX16} for more details, \cite{GSWW19, FGW20, FH21} for numerical stability investigations, and \cite{VSWP16} for a discussion of non-generic cases without tail oscillations
\par
The KdV equation also governs Cauchy problems in atomistic systems \mbox{provided} that initial values are chosen appopriately. \cite{SW00,KP17} show that the FPUT dynamics can be approximated on large time scales by two KdV solutions  traveling in opposite directions and \cite{HW08, HW09, SKSH14} establish similar results that hold for all times but only a special subclass of initial data. Moreover, \cite{GMWZ14} and \cite{MW22} study the KdV limit in chains with periodically varying and random parameters, respectively, while \cite{HW20} concerns mass-in-mass lattices.
%
%
\subsection{Setting of the problem}
%
We consider a spatially one-dimensional continuous and infinitely extended medium whose material points interact pairwise. According to \cite{SL16}, the simplest peridynamical equation of motion  is given by the integro-differential equation
\begin{align}\label{peridynamic_equation_1}
\partial_{\tilde{t}}^2\tilde{u}\pair{\tilde{t}}{\tilde{y}}
=\int_{-\infty}^{\infty}\partial_r
\Phi\pair{\tilde{u}\pair{\tilde{t}}{\tilde{y}+\xi}-\tilde{u}\pair{\tilde{t}}{\tilde{y}}}{\xi}
\dint{\xi}\,,
\end{align}
where $\tilde{u}$ denotes the scalar displacement field describing the position of material point $\tilde{y}$ at time $\tilde{t}$. Moreover, $\xi$ is the bond variable and the interactions are modeled by the force function $\partial_r \Phi$, which stems from the peridynamical potential $\Phi$  and is assumed to satisfy Newton's third law of motion via
\begin{align}\notag
\partial_r \Phi\pair{r}{\xi}=-\partial_r \Phi\pair{-r}{-\xi}\,.
\end{align} 
Thanks to this identity, we can replace (\ref{peridynamic_equation_1}) by the formula
\begin{align}\label{peridynamic_equation_2}
\partial_{\tilde{t}}^2\tilde{u}\pair{\tilde{t}}{\tilde{y}}
=\int_{0}^{\infty}\partial_r
\Phi\pair{\tilde{u}\pair{\tilde{t}}{\tilde{y}+\xi}-\tilde{u}\pair{\tilde{t}}{\tilde{y}}}{\xi}
-\partial_r\Phi\pair{\tilde{u}\pair{\tilde{t}}{\tilde{y}}-\tilde{u}\pair{\tilde{t}}{\tilde{y}-\xi}}{\xi}\dint{\xi}\,
\end{align}
which is more convenient for our purposes as it involves only positive bond variables $\xi>0$. Moreover, it can be viewed as a universal equation for elastic wave propagation in one space dimension and includes many other lattice or PDE models as special or limiting cases, see the discussion in \cite{SL00,HM19a,KK23}. In particular, assuming that all dominant forces originate from the finetely many bonds $\xi\in\{1,\,\hdots,\,M\}$, the integral with respect to $\xi$ can be replaced by a sum and the peridynamical wave equation \eqref{peridynamic_equation_2}
reduces via $\Phi_m\at{r}=\Phi\pair{r}{m}$ to 
\begin{align*}
\partial_{\tilde{t}}^2\tilde{u}\pair{\tilde{t}}{\tilde{y}}
=\sum_{m=1}^M
\Phi_m^\prime\Bat{\tilde{u}\pair{\tilde{t}}{\tilde{y}+m}-\tilde{u}\pair{\tilde{t}}{\tilde{y}}}
-\Phi_m^\prime\Bat{\tilde{u}\pair{\tilde{t}}{\tilde{y}}-\tilde{u}\pair{\tilde{t}}{\tilde{y}-m}}\,.
\end{align*}
This equation describes that any material point interacts with finitely many other points only and the equivalent lattice model coincides for $M=1$ with the well-known FPUT chain.
\par
In this paper, we study traveling waves in peridynamical media. Combining \eqref{peridynamic_equation_2} with the rescaled traveling wave ansatz
\begin{align}\label{scaling_peridynamics}
\tilde{u}\pair{\tilde{t}}{\tilde{y}}=\tilde{U}_{\eps}\at{\tilde{x}}=\eps\,U_\eps\at{x}\,, \qquad\qquad x=\eps\,\tilde{x}=\eps\,\tilde{y}-\eps\, c_\eps\,\tilde{t}
\end{align}
we obtain the nonlinear and nonlocal equation 
\begin{align}
\label{traveling_wave_equation_scaled}
\eps^3 c_\eps^2\,U_\eps''\at{x}
=\int_{0}^{\infty} \partial_r\Phi\pair{\eps\,U_\eps\at{x+\eps\,\xi}-\eps\,U_\eps\at{x}}{\xi}-\partial_r\Phi\pair{\eps\,U_\eps\at{x}-\eps\,U_\eps\at{x-\eps\,\xi}}{\xi}\dint{\xi}\,,
\end{align}
where $\eps$ is an additional scaling parameter. The existence of solutions has been proven in \cite{PV18,HM19a} by means of constrained optimization techniques (using $\eps=1$) but here we are interested in effective formulas for the long-wave length regime $\eps\to0$ (KdV limit), in which the waves have small amplitudes and propagate with near sonic speed as in \eqref{wave_speed_peridynamic}. In view of the known results for nonlinar lattices and PDEs, the existence of KdV-type waves in nonlocal media is not surprising and generally expected. The rigorous proof, however, is more complicated in the peridynamical setting as it involves the additional variable $\xi$ and requires asymptotic estimates for the continuum of nonlinear interaction forces. Of particular importance is the $\xi$-dependence of the linear and the quadratic terms in the Taylor expansion of $\partial_r\Phi\pair{r}{\xi}$ with respect to $r$.
\begin{ass}\label{assumption_1}
The force function  $\partial_r \Phi: \mathbb{R}\times [0,\,\infty) \to \mathbb{R}$ can be written as
\begin{align}\label{forcefunction}
\partial_r\Phi\pair{r}{\xi}&=\alpha\at{\xi}\,r+\beta\at{\xi}\,r^2+\partial_r\psi\pair{r}{\xi}\,,
\end{align}
where the coefficient functions $\alpha$ and $\beta$ are piecewise continuous and positive for all $\xi\in[0,\,\infty)$.  The function $\partial_r\psi$ is continuously differentiable in $r$, continuous in $\xi$, and satisfies $\partial_r\psi\pair{0}{\xi}=0$ as well as
\begin{align}\label{estimate_second_derivative_psi}
\left|\partial_r^2\psi\pair{r}{\xi}\right|\leq \gamma\at{\xi}\,r^2
\end{align}
for any $\xi$ and all $|r|\leq 1$. Moreover, the integrals
\begin{align}\label{integral_alpha}
 \int_0^\infty \alpha\at{\xi}\,\xi^2\dint{\xi}\,, 
\qquad\qquad
\int_0^\infty \alpha\at{\xi}\, \xi^4\dint{\xi}\,,
\qquad\qquad
 \int_0^\infty \alpha\at{\xi}\, \xi^6\dint{\xi}
\end{align}
and
\begin{align}\label{integral_beta}
\int_0^\infty \beta\at{\xi}\,\xi^3\dint{\xi}\,,
\qquad\qquad
\int_0^\infty \beta\at{\xi}\, \xi^{5/2}\dint{\xi}\,,
 \qquad\qquad
\int_0^\infty \beta\at{\xi}\, \xi^5\dint{\xi}\,,
\end{align}
are positive and finite, while
\begin{align}\label{integral_gamma}
\int_0^\infty \gamma\at{\xi}\, \xi^3\dint{\xi}\,,
\qquad\qquad
\int_0^\infty \gamma\at{\xi}\,\xi^4\dint{\xi}
\end{align}
are well-defined and nonnegative.
\end{ass}
The assumptions on \eqref{integral_alpha}$_1$, \eqref{integral_alpha}$_2$ and \eqref{integral_beta}$_1$ are essential for the asymptotic problem to be well-defined, while the other integrability conditions simplify the analysis and might be weakened at the price of more technical effort. In mechanics one often postulates a finite interaction horizon $H$ such that $\partial_r \Phi\pair{r}{\xi}=0$ holds for all $r$ and $\xi>H$ but our analysis also allows for $H=\infty$ provided that $\al\at\xi$, $\be\at\xi$ and $\ga\at\xi$ decay sufficiently fast for $\xi\to\infty$. We further mention that alternative constitutive laws can be found in the literature. For instance,  the peridynamical forces in \cite{HM19a} are modeled via
\begin{align*}
\Phi\pair{r}{\xi}= a\at\xi\, \Phi_{\text{eff}}\bat{b\at\xi\,r}\,,\qquad \partial_r 
\Phi\pair{r}{\xi}=a\at\xi\, b\at\xi\, \Phi_{\text{eff}}^\prime\bat{b\at\xi\,r}
\end{align*}
in terms of a single effective potential. The choice
\begin{align*}
a\at\xi=\xi\,\chi_{\ccinterval{0}{H}}\at\xi\,,\qquad b\at\xi=\xi^{-1}\,\chi_{\ccinterval{0}{H}}\at\xi\,,\qquad
\Phi_{\text{eff}}\at{s}=
C_2\,s^2+C_3\,s^3\,,
\end{align*}
where $\chi_{\ccinterval{0}{H}}$ denotes the indicator function of the interval $\ccinterval{0}{H}$, implies
\begin{align*}
\al\at\xi=C_2\,\xi^{-1}\,\chi_{\ccinterval{0}{H}}\at\xi\,,\qquad 
\be\at\xi=C_3\,\xi^{-2}\,\chi_{\ccinterval{0}{H}}\at\xi\,,\qquad \psi\pair{r}{\xi}=0
\end{align*}
and is compatible with Assumption  \ref{assumption_1}. Similar constitutive relations have been proposed and studied in \cite{SL16}.
%
%
%
\subsection{Overview on the main result and the proof strategie}
%
Our asymptotic analysis generalizes ideas and methods from \cite{FPI99} and \cite{HML16}, which prove the existence of KdV waves in spatially discrete atomic chains with a single and finitely many bond lenghtes, respectively. However, both the nonlocality and the nondiscreteness of the peridynamical medium necessitate several adjustments, especially the use of Bochner integrals and more careful estimates for the singular limit $\eps\to0$.
\begin{figure}[ht]%
\centering{%
\includegraphics[width=0.95\textwidth]{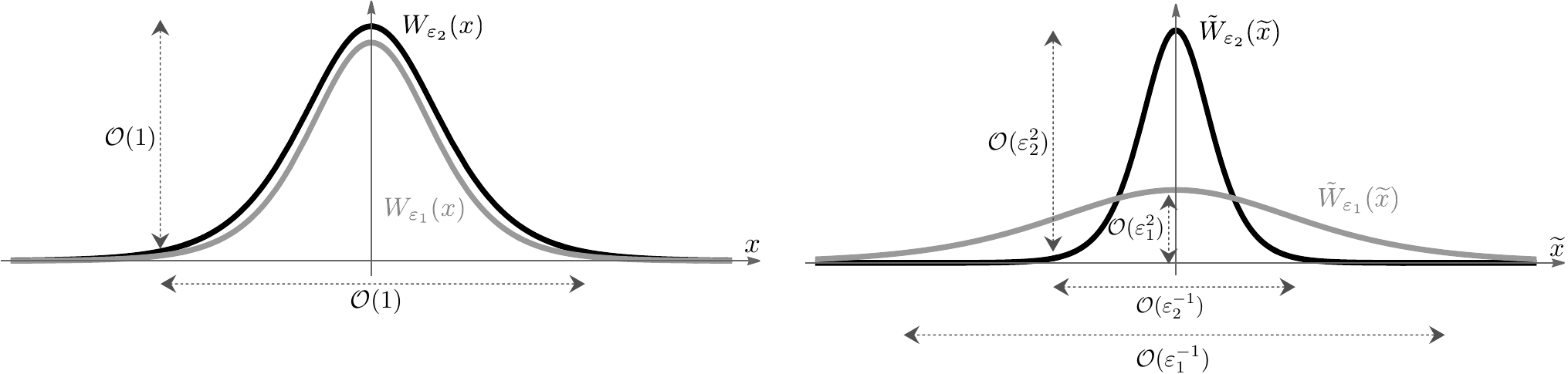}%
}\caption{%
Cartoon of the velocity profile of KdV waves for two different scaling parameters $\eps_1$ (gray) and $\eps_2$ (black) with $0<\eps_1<\eps_2<1$.
The left panel illustrates that the scaled functions $W_\eps$ converge for $\eps\to0$ to the unique limit $W_0$ from \eqref{Kdv_equation_solution} while the right panel shows how the unscaled counterpart $\tilde{W}_\eps$ from \eqref{VelocityScaling} depends on $\tilde{x}$, the original space variable in the comoving frame. 
}%
\label{fig_solutions_W_eps}%
\end{figure}%
\par
As in \cite{HML16}, we link the scaling parameter $\eps$ to the speed via
\begin{align}\label{wave_speed_peridynamic}
c_\eps^2=c_0^2+\eps^2\,, 
\qquad\qquad\qquad
 c_0^2=\int_0^\infty \alpha\at{\xi}\,\xi^2\dint{\xi}\,,
\end{align}
and regard the scaled velocity profile
\begin{align}
\notag
W_\eps:=U'_\eps
\end{align}
as the key quantity, while \cite{FPI99} works with the distance profile $U_\eps\at{\cdot +\eps}-U_\eps\at{\cdot}$ in FPUT chains. Using a convolution operator $\calA_\eta$, which we introduce in (\ref{operator_A_eta}), we can reformulate (\ref{traveling_wave_equation_scaled}) as 
\begin{align}\label{operator_equation_peridynamics}
\calB_\eps W_\eps=\calQ_\eps [W_\eps]+\eps^2\calP_\eps [W_\eps]\,,
\end{align}
where the Bochner integral operator $\calB_\eps$ is given in \eqref{operator_B_eps_peridynamics} and collects all terms that are linear with respect to $W_\eps$. Moreover, the nonlinear Bochner operators $\calQ_\eps$ and $\calP_\eps$  are defined in \eqref{operators_Q_eps_P_eps_peridynamics} and represent all quadratic and higher order terms, respectively. Formal asymptotic arguments applied to \eqref{operator_equation_peridynamics} --- see \S\ref{sect:prelim} as well as \cite{HML16,KK23} for more details --- yield with formula \eqref{operator_equation_B_0_Q_0_peridynamic} an analogue to the KdV traveling wave equation 
\begin{align}\label{KdV_equation_peridynamic}
W_0^{\prime\prime}=d_1\,W_0-d_2\,W_0^2
\end{align}
for the limit $\eps\to0$. The ODE constants $d_1$, $d_2$ depend on the coefficient functions $\alpha$, $\beta$ as described in equation (\ref{KdV_equation_coeffizients}) below and the only even and homoclinic solution is given by
\begin{align}\label{Kdv_equation_solution}
W_0\at{x}=\frac{3\,d_1}{2\,d_2}\operatorname{sech}^2\at{\frac{1}{2}\,\sqrt{d_1}\,x}\in \mathsf{C}^\infty\at{\mathbb{R}}\,.
\end{align}
However, the nonlocal equation \eqref{operator_equation_peridynamics} is not regular but a singular perturbation of \eqref{KdV_equation_peridynamic} and this complicates the analysis for small $\eps$. As in \cite{HML16}, we further introduce the predictor-corrector-ansatz 
\begin{align}\label{predictor_corrector}
W_\eps=W_0+\eps^2\,V_\eps \in\mathsf{L}_{\mathrm{even}}^2\at{\mathbb{R}}\,,
\end{align}
transform the operator equation \eqref{operator_equation_peridynamics} into the equivalent fixed point problem
\begin{align}
\label{FixedPoint}
V_\eps=\calF_\eps[V_\eps]\,,
\end{align}
and employ the Contraction Mapping Principle to prove the existence and local uniqueness of $V_\eps$ for all sufficiently small $\eps$. The key technical problem in this approach is to establish uniform invertibility estimates for a linear but nonlocal operator $\calL_\eps$, which represents the linearization of \eqref{operator_equation_peridynamics} around $W_0$ and whose inverse enters  the definition of the nonlinear operator $\calF_\eps$, see equations \eqref{operator_L_eps_peridynamic} and  \eqref{operator_F_eps_peridynamic}. We further mention that \cite{FPI99}
solves the nonlinar $\eps$-problem for FPUT chains by a variant of the Implicit Function Theorem but also needs careful estimates concerning the inverse of its linearization.
\par
Our main result can be summarized as follows and provides via \eqref{scaling_peridynamics} and 
\begin{align}
\label{VelocityScaling}
\tilde{U}_{\eps}^\prime\at{\tilde{x}}=\tilde{W}_{\eps}\at{\tilde{x}}=\eps^2\, W_\eps\at{x}
\end{align}
a $\eps$-parametrized family of solitary wave solutions to the peridynamical wave equation \eqref{peridynamic_equation_2} that is illustrated in  Figure \ref{fig_solutions_W_eps}.
\begin{result}
Let Assumption \ref{assumption_1} be satisfied and  $\eps>0$ be sufficiently small. Then there exists a unique solution \mbox{$W_\eps\in\mathsf{L}_{\mathrm{even}}^2\at{\mathbb{R}}$} to the scaled peridynamic equation (\ref{traveling_wave_equation_scaled}) with $c_\eps$ as in \eqref{wave_speed_peridynamic} that lies in a small neighborhood of the KdV wave $W_0$ from \eqref{Kdv_equation_solution}. In particular, we have
\begin{align*}
\left\| W_\eps-W_0\right\|_2\leq C\,\eps^2
\end{align*}
for some constant $C>0$ independent of $\eps$.
\end{result}
The paper is organized as follows:  In \S \ref{subsection_integraloperator} we introduce a family of convolution operators which allows us in \S\ref{subsection_operator_equation} to transform the rescaled peridynamical equation (\ref{traveling_wave_equation_scaled}) into the nonlinear integral equation \eqref{operator_equation_peridynamics} for the velocity profile $W_\eps$.  The asymptotic properties of the involved operators are discussed in \S\ref{subsection_operators_properties_1} and \S\ref{subsection_operators_properties_2}.  In \S\ref{subsection_operator_L_eps} we linearize the nonlinear problem (\ref{traveling_wave_equation_scaled}) around $W_0$ and prove in \S\ref{subsection_invertibility_L_eps} that the corresponding linear operator $\calL_\eps$ is uniformly invertible on the space of all even $\mathsf{L}^2$-functions. In \S\ref{subsection_fixed_point_argument} we finally solve the nonlinear problem \eqref{operator_equation_peridynamics} by applying the Contraction Mapping Principle to the corrector equation \eqref{FixedPoint}. Our analysis in \S\ref{sect:prelim} and \S\ref{section_main_result} employs Bochner integrals and we refer to the appendix for more details concerning both the general theory and the operators at hand.
%
%
\section{Preliminaries}\label{sect:prelim}
%
%
In this section we reformulate the rescaled peridynamical equation (\ref{traveling_wave_equation_scaled}) as a nonlinear eigenvalue problem and study the properties of the involved integral operators.
%
%
\subsection{The integral operator \texorpdfstring{$\calA_\eta$}{A}}\label{subsection_integraloperator}
%
%
For $\eta>0$ we denote by $\calA_\eta$ the integral operator 
\begin{align}\label{operator_A_eta}
\left(\mathcal{A}_\eta\,W\right)(x)=\frac{1}{\eta}\int_{x-\eta/2}^{x+\eta/2} W(y)\dint{y} =\frac{1}{\eta}\left(\chi_\eta *W\right)(x)\,,
\end{align} 
which describes the convolution with the indicator function  $\chi_\eta$ of the interval $\left[-\eta/2,\, \eta/2\right]$. Using $\text{sinc}\at{z}=\sum_{j=0}^{\infty}\frac{(-1)^j\, z^{2j}}{\left(2\,j+1\right)!}$ and direct computation we verify
\begin{align*}
\mathcal{A}_\eta\mathtt{e}^{\mathtt{i}kx}
=\text{sinc}\at{\frac{\eta\,k}{2}}\,\mathtt{e}^{\mathtt{i}kx}
=\sum_{j=0}^{\infty}\frac{\eta^{2j}}{2^{2j}\left(2\,j+1\right)!}\,(-1)^jk^{2j}\mathtt{e}^{\mathtt{i}kx}
=\sum_{j=0}^{\infty}\frac{\eta^{2j}}{2^{2j}\left(2\,j+1\right)!}\,\partial_x^{2j}\mathtt{e}^{\mathtt{i}kx}
\end{align*}
for any $k\in\Rset$ and conclude that the pseudo-differential operator $\calA_\eta$ can be regarded as a singular perturbation of the idendity operator $\text{Id}$. In particular, for $\eta:=\eps\,\xi \ll 1$ we obtain the formal expansion
\begin{align}\label{singular_expansion_A_eta_peridynamics}
\calA_{\eps\xi}=\operatorname{Id}+\frac{\eps^2\,\xi^2}{24}\,\partial_x^2+\calO\at{\eps^4}\,,
\end{align}
where the error terms contain higher derivatives. The integral operator $\calA_\eta$ exhibits a number of useful properties which we use throughout the paper.
\begin{lemma}[properties of $\mathcal{A}_\eta$]\label{properties_A_eta}
For all $\eta>0$ the operator $\calA_\eta$ admits the following properties:
\begin{itemize}
\item[1.] 
For $1\leq p\leq\infty$ and $W\in \mathsf{L}^p\at{\mathbb{R}}$ we have $\mathcal{A}_\eta W\in \mathsf{L}^p\at{\mathbb{R}}\cap \mathsf{L}^\infty \at{\mathbb{R}}$ with
\begin{align}\label{estimate_A_eta_1}
\left\| \mathcal{A}_\eta W\right\|_\infty \leq \eta^{-1/p}\, \left\| W\right\|_p\,, \qquad \left\| \mathcal{A}_\eta W\right\|_p \leq \left\| W\right\|_p\,.
\end{align}
\item[2.] 
For any $W\in \mathsf{L}^2\at{\mathbb{R}}$ we have $\left(\mathcal{A}_\eta W\right)\in\mathsf{W}^{1,2}\at{\mathbb{R}}$  with
\begin{align*}
\left(\mathcal{A}_\eta W\right)'\at{x}=\eta^{-1} \left(W\at{x+\eta/2}-W\at{x-\eta/2}\right)\,,\qquad \left\| \left(\mathcal{A}_\eta W\right)'\right\|_2 \leq 2\,\eta^{-1}\, \left\| W\right\|_2
\end{align*}
as well as
\begin{align}\label{asymptotic_A_eta}
\left(\mathcal{A}_\eta W\right)\at{x}\xrightarrow{x\to \pm\infty} 0\,.
\end{align}
\item[3.] 
For any $W\in \mathsf{L}^2\at{\mathbb{R}}$ the  map $\eta\mapsto \calA_\eta W $ is differentiable with derivative 
\begin{align*}
\frac{\dint{}}{\dint{\eta}}(\calA_\eta W )\at{x}=-\eta^{-1}\bat{\calA_\eta W}\at{x}+ (2\eta)^{-1}\left( W\at{x+\eta/2}+W\at{x-\eta/2}\right)\in\mathsf{L}^2\at{\mathbb{R}}\,.
\end{align*}
\item[4.] 
The convex cone $\mathsf{K}:=\left\{ W\in\mathsf{L}^2\at{\mathbb{R}}\,:\,W \text{ is even, nonnegativ and unimodal}\right\}$ is invariant under $\calA_\eta$, where unimodal means that $W$ is monotonically increasing and decreasing for $x<0$ and $x>0$, respectively. 
\item[5.] 
$\calA_\eta$ is a pseudo-differential operator and diagonalizes in Fourier space. Its symbol function is given by
\begin{align}\label{symbol_A_eta}
a_\eta\at{k}:=\operatorname{sinc}\at{k\eta/2}
\end{align}
with $\operatorname{sinc}\at{z}=\sin\at{z}/z$.
\item[6.] 
$\calA_\eta$ is self-adjoint on $\mathsf{L}^2\at{\mathbb{R}}$.
\item[7.] 
For any $W\in\mathsf{L}_{\mathrm{loc}}^1\at{\mathbb{R}}$, the estimate
\begin{align}\label{estimate_A_eta_pointwise}
\left|(\mathcal{A}_\eta W)\at{x}\right|\leq (\mathcal{A}_\eta \left| W\right|)\at{x}
\end{align}
holds pointwise in $x\in\mathbb{R}$. 
\item[8.]  
For any sufficiently regular $W$ we have
\begin{align}\label{error_A_eta_1}
\left\|\mathcal{A}_\eta W-W\right\|_2\leq C\,\eta^2\,\left\| W''\right\|_2\,, \qquad \left\|\mathcal{A}_\eta W-W-\frac{\eta^2}{24}W''\right\|_2\leq C\eta^4\left\| W''''\right\|_2
\end{align}
and
\begin{align*}
\left\|\mathcal{A}_\eta W-W\right\|_\infty\leq C\,\eta^2\,\left\| W''\right\|_\infty\,,\qquad\left\|\mathcal{A}_\eta W-W-\frac{\eta^2}{24}W''\right\|_\infty\leq C\eta^4\left\| W''''\right\|_\infty\,.
\end{align*}
In particular, the convergence
\begin{align}
\label{convergence_A_etaW_to_W}
\calA_\eta W \xrightarrow{\;\eta\to 0\;} W \qquad \text{ strongly in } \quad \mathsf{L}^2\at{\mathbb{R}}
\end{align}
is satisfied for any $W\in\mathsf{L}^2\at{\mathbb{R}}$.
\end{itemize}
\end{lemma}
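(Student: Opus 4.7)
The plan is to treat $\calA_\eta$ as $\eta^{-1}$ times convolution with the indicator $\chi_\eta$, so that each of the eight items reduces either to a standard convolution identity or to a short direct computation. I would begin with Parts 5 and 6 as the template: since the Fourier transform of $\chi_\eta$ equals $\eta\,\operatorname{sinc}\at{k\eta/2}$, the symbol of $\calA_\eta$ is $\operatorname{sinc}\at{k\eta/2}$, which is real and even in $k$, giving both the diagonalization in \eqref{symbol_A_eta} and self-adjointness on $\fspaceL^2\at{\Rset}$. Part 7 is the pointwise triangle inequality under the integral sign, and Part 1 combines Young's convolution inequality $\nnorm{\chi_\eta\ast W}_p\leq\nnorm{\chi_\eta}_1\,\nnorm{W}_p=\eta\,\nnorm{W}_p$ with H\"older's inequality to bound the average pointwise by $\eta^{-1/p}\,\nnorm{W}_p$.

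For Part 2 I would differentiate $\eta^{-1}\int_{x-\eta/2}^{x+\eta/2}W\at{y}\dint{y}$ via the fundamental theorem of calculus, which yields the stated formula for $\at{\calA_\eta W}'$; the $\fspaceL^2$ bound then follows from the triangle inequality and the translation invariance of the norm. The decay \eqref{asymptotic_A_eta} is immediate from the Cauchy-Schwarz estimate $\abs{\at{\calA_\eta W}\at{x}}\leq\eta^{-1/2}\bat{\int_{x-\eta/2}^{x+\eta/2}\abs{W\at{y}}^2\dint{y}}^{1/2}$ and absolute continuity of the Lebesgue integral. Part 3 is obtained by applying the product and Leibniz rules to the same expression: the derivative of the prefactor $\eta^{-1}$ produces the first summand, while the derivatives of the endpoints $x\pm\eta/2$ produce the second.

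For Part 4 evenness and nonnegativity of $\calA_\eta W$ are immediate from those of $W$ combined with the symmetry of $\chi_\eta$. Unimodality is the only mildly subtle point: using the derivative formula from Part 2 I must show $W\at{x+\eta/2}\leq W\at{x-\eta/2}$ for $x>0$. This is automatic when $x\geq\eta/2$, since both arguments then lie in $\cointerval{0}{\infty}$, on which $W$ is nonincreasing. When $0<x<\eta/2$ I would exploit evenness to rewrite $W\at{x-\eta/2}=W\at{\eta/2-x}$, and since $0<\eta/2-x<\eta/2+x$ the same monotonicity yields the desired inequality.

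Part 8 is the most technical item but still essentially routine. Starting from $\at{\calA_\eta W-W}\at{x}=\eta^{-1}\int_{-\eta/2}^{\eta/2}\bat{W\at{x+s}-W\at{x}}\dint{s}$, I would insert the Taylor remainder in integral form; the linear term in $s$ integrates to zero by oddness, leaving a double integral that I control by Minkowski's integral inequality in the $\fspaceL^2$ case and by the uniform bound on $W''$ in the $\fspaceL^\infty$ case. The fourth-order estimates proceed identically but with an extra Taylor step, where the $\eta^2/24$ correction arises from $\eta^{-1}\int_{-\eta/2}^{\eta/2}\nat{s^2/2}\dint{s}=\eta^2/24$ and the remainder is controlled by $W''''$. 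The strong convergence \eqref{convergence_A_etaW_to_W} then follows by density of smooth functions in $\fspaceL^2\at{\Rset}$ together with the uniform contraction bound from Part 1. The main obstacle is purely bookkeeping: tracking constants in the higher-order Taylor expansions and justifying the interchange of integrals via Minkowski; no step requires genuinely new ideas.
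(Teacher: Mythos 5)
Your proposal is correct; the paper itself gives no proof of this lemma, merely remarking that ``all assertions follow from standard arguments'' and citing \cite[Lemma~2.5]{Her10} and \cite[Lemma~2.3]{HML16}. Your argument supplies exactly the elementary details that such a citation elides (Young and H\"older for Part~1, FTC and Cauchy--Schwarz for Part~2, Leibniz for Part~3, the casewise monotonicity argument via evenness for Part~4, the Fourier symbol $\widehat{\chi_\eta}\at{k}=\eta\,\operatorname{sinc}\at{k\eta/2}$ for Parts~5--6, the triangle inequality under the integral for Part~7, and Taylor with integral remainder plus Minkowski for Part~8), so it is in substance the same route, just written out.
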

\begin{proof}
All assertions follow from standard arguments and we refer to \cite[Lemma 2.5]{Her10} and \cite[Lemma 2.3]{HML16} for more details.
\end{proof}
%
%
\subsection{Reformulation of the problem}\label{subsection_operator_equation}
%
%
A key observation is that (\ref{traveling_wave_equation_scaled}) can be reformulated as a nonlinear fixed-point equation that involves the auxiliary operator $\calA_\eta$.
\begin{lemma}\label{lemma_eigenvalueproblem_peridynamics}
For $W_\eps=U_\eps'\in\mathsf{L}^2\at{\mathbb{R}}$ the traveling wave equation (\ref{traveling_wave_equation_scaled}) is equivalent to the nonlinear integral equation
\begin{align}\label{eigenvalueproblem_peridynamics}
\eps^2 c_\eps^2\, W_\eps=\int_0^\infty \xi\, \calA_{\eps\xi}\,\partial_r\Phi\pair{\eps^2\xi\,\calA_{\eps\xi} W_\eps}{\xi}\dint{\xi}\,.
\end{align}
\end{lemma}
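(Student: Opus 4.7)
The plan is to reduce the equivalence to the elementary identity
\begin{align*}
\eps\,U_\eps\at{x+\eps\,\xi} - \eps\,U_\eps\at{x}
= \eps\int_x^{x+\eps\,\xi}W_\eps\at{y}\dint{y}
= \eps^2\,\xi\,\bat{\calA_{\eps\,\xi} W_\eps}\at{x+\eps\,\xi/2}\,,
\end{align*}
which is immediate from $W_\eps=U_\eps^\prime$, the fundamental theorem of calculus, and the definition \eqref{operator_A_eta} of $\calA_{\eps\,\xi}$; the corresponding backward difference yields $\eps^2\,\xi\,(\calA_{\eps\,\xi}W_\eps)\at{x-\eps\,\xi/2}$. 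Introducing the shorthand $G\pair{y}{\xi}\deq\partial_r\Phi\pair{\eps^2\,\xi\,(\calA_{\eps\,\xi}W_\eps)\at{y}}{\xi}$, the right-hand side of \eqref{traveling_wave_equation_scaled} then takes the symmetric shifted-difference form $\int_0^\infty \bat{G\pair{x+\eps\,\xi/2}{\xi}-G\pair{x-\eps\,\xi/2}{\xi}}\dint{\xi}$.

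Next, I would link the two equations via differentiation in $x$. Lemma \ref{properties_A_eta}.2 yields
\begin{align*}
\partial_x\bat{\calA_{\eps\,\xi} G(\cdot,\,\xi)}\at{x}
= \frac{1}{\eps\,\xi}\,\Bat{G\pair{x+\eps\,\xi/2}{\xi}-G\pair{x-\eps\,\xi/2}{\xi}}\,,
\end{align*}
so multiplying by $\xi$ and integrating in $\xi$ shows that $\eps$ times the $x$-derivative of the right-hand side of \eqref{eigenvalueproblem_peridynamics} equals the right-hand side of \eqref{traveling_wave_equation_scaled}. The two left-hand sides obey the same relation, since $\eps^3\,c_\eps^2\,U_\eps^{\prime\prime}=\eps\,\partial_x\bat{\eps^2\,c_\eps^2\,W_\eps}$. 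Hence any $W_\eps\in\mathsf{L}^2\at{\mathbb{R}}$ solving \eqref{eigenvalueproblem_peridynamics} automatically solves \eqref{traveling_wave_equation_scaled}. For the converse I would integrate \eqref{traveling_wave_equation_scaled} from $-\infty$ to $x$, swap the $y$- and $\xi$-integrations by Fubini, and exploit the telescoping
\begin{align*}
\int_{-\infty}^x \bat{G\pair{y+\eps\,\xi/2}{\xi}-G\pair{y-\eps\,\xi/2}{\xi}}\dint{y}
= \int_{x-\eps\,\xi/2}^{x+\eps\,\xi/2} G\pair{z}{\xi}\dint{z}
= \eps\,\xi\,(\calA_{\eps\,\xi}G(\cdot,\,\xi))\at{x}\,;
\end{align*}
the integration constant must vanish, since both sides of \eqref{eigenvalueproblem_peridynamics} tend to zero at $\pm\infty$ by the decay \eqref{asymptotic_A_eta}.

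The main technical obstacle is making these $\xi$-interchange manipulations rigorous within the Bochner integral framework used throughout the paper. Concretely, one has to verify that $\xi\mapsto\xi\,\calA_{\eps\,\xi}\,\partial_r\Phi\pair{\eps^2\,\xi\,\calA_{\eps\,\xi}W_\eps}{\xi}$ is Bochner-integrable as a map into $\mathsf{L}^2\at{\mathbb{R}}$ and that $\partial_x$ may be pulled inside the $\xi$-integral. This follows by combining the Taylor splitting \eqref{forcefunction} of $\partial_r\Phi$, the mapping estimates from Lemma \ref{properties_A_eta}, and the weighted integrability of $\alpha$, $\beta$, $\gamma$ imposed in Assumption \ref{assumption_1}; the measurability and dominated-convergence bookkeeping is deferred to the appendix.
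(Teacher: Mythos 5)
Your proposal is correct and follows essentially the same route as the paper's proof: both use the fundamental theorem of calculus to rewrite $\eps\bat{U_\eps\at{x+\eps\,\xi}-U_\eps\at{x}}$ as $\eps^2\,\xi\,\bat{\calA_{\eps\xi}W_\eps}\at{x+\eps\,\xi/2}$ (and its backward analogue), then pass between \eqref{traveling_wave_equation_scaled} and \eqref{eigenvalueproblem_peridynamics} by differentiating respectively integrating in $x$, killing the constant of integration via decay at infinity. One small remark: to conclude that the constant vanishes you should note explicitly, as the paper does, that $\partial_r\Phi\pair{0}{\xi}=0$, since the decay \eqref{asymptotic_A_eta} of $\calA_{\eps\xi}W_\eps$ alone does not directly control the $\xi$-integral on the right of \eqref{eigenvalueproblem_peridynamics} without this normalization.
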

\begin{proof}
Let $U_\eps$ be the primitive  of $W_\eps$, such that $U_\eps\at{x}=\int_{x_0}^{x} W_\eps\at{x}\dint{x}$ holds for arbitrary $x_0\in\mathbb{R}$. Using the identities  
\begin{align*}
\eps\left(\,U_\eps\at{x+\eps\,\xi}-U_\eps\at{x}\right)=\eps^2\,\xi\left(\calA_{\eps\xi} U_\eps\right)'\at{x+\eps\xi/2}
=\eps^2\,\xi\left(\calA_{\eps\xi} W_\eps\right)\at{x+\eps\xi/2}
\end{align*}
and
\begin{align*}
\eps\left(\,U_\eps\at{x}-U_\eps\at{x-\eps\,\xi}\right)
=\eps^2\,\xi\left(\calA_{\eps\xi} U_\eps\right)'\at{x-\eps\xi/2}
=\eps^2\,\xi\left(\calA_{\eps\xi} W_\eps\right)\at{x-\eps\xi/2}
\end{align*}
respectively, we obtain (\ref{traveling_wave_equation_scaled}) after differentiating (\ref{eigenvalueproblem_peridynamics}) with respect  to $x$. On the other hand, integrating  (\ref{traveling_wave_equation_scaled}) with respect to $x$ yields (\ref{eigenvalueproblem_peridynamics}) with an additional constant of integration $C$. This, however, must vanish due to $W_\eps\in\mathsf{L}^2\at{\mathbb{R}}$, \eqref{asymptotic_A_eta}, and since $\partial_r \Phi\pair{0}{\xi}=0$ holds for all $\xi$.
\end{proof}
In the next step we transform the eigenvalue problem from Lemma \ref{lemma_eigenvalueproblem_peridynamics} into the operator equation \eqref{operator_equation_peridynamics}. To this end we insert the Taylor expansion (\ref{forcefunction}) and the speed relation (\ref{wave_speed_peridynamic}) into (\ref{eigenvalueproblem_peridynamics}), collect all linear$\,|\,$nonlinear terms on the left$\,|\,$right hand side, and divide  by $\eps^4$. This yields the linear operator $\calB_\eps:\mathsf{L}^2\at{\mathbb{R}}\to\mathsf{L}^2\at{\mathbb{R}}$ with
\begin{align}\label{operator_B_eps_peridynamics}
\calB_\eps\,  W := W+\int_0^\infty \alpha\at{\xi}\,\xi^2\,\frac{ W-\calA_{\eps \xi}^2 W}{\eps^2}\dint{\xi}\,,
\end{align} 
while the nonlinear operators $\calQ_\eps,\, \calP_\eps:\mathsf{L}^2\at{\mathbb{R}}\to\mathsf{L}^2\at{\mathbb{R}}$ are given by
\begin{align}\label{operators_Q_eps_P_eps_peridynamics}
\calQ_\eps [W]:= \int_0^\infty \beta\at{\xi}\,\xi^3\calA_{\eps\xi}\left(\calA_{\eps\xi} W\right)^2\dint{\xi}\,,
\qquad
\calP_\eps [W]:= \frac{1}{\eps^6} \int_0^\infty \xi\,\calA_{\eps\xi}\,\partial_r \psi\pair{\eps^2\xi\,\calA_{\eps\xi}W}{\xi}\dint{\xi}\,.
\end{align}
All these operators are well-defined in the sense of Bochner integrals and the details are given in the appendix, see Proposition \ref{prop:Bochner1}. Of course, \eqref{operator_equation_peridynamics} admits the trivial solution $W_\eps\equiv 0$ but below we show that there also exists another unique solution in a small vicinity of the KdV wave $W_0$.
%
%
\subsection{Properties of the operator \texorpdfstring{$\calB_\eps$}{B}}\label{subsection_operators_properties_1}
%
%
The properties of the pseudo-differential operator $\calB_\eps$ are determined by its Fourier symbol and imply the existence of $\calB_\eps^{-1}$ thanks to the supersonicity of the wave speed \eqref{wave_speed_peridynamic}. The inverse operator is important for proving that the linearized operator $\calL_\eps$ is uniformly invertible, see Proposition \ref{proposition_uniform_invertibility} below. 
\begin{lemma}[properties of $\calB_\eps$]
For any $\eps>0$, the linear operator $\calB_\eps$ is continuous, self-adjoint, uniformly invertible on $\mathsf{L}^2\at{\mathbb{R}}$, and maps the subspace of even functions into itself. Moreover,  in Fourier space it corresponds to the multiplication with the symbol function
\begin{align}\label{symbol_b_eps_peridynamics}
b_\eps\at{k}=1+\int_0^\infty \alpha\at{\xi}\,\xi^2\,\frac{1-\operatorname{sinc}^2\at{\frac{\eps\,\xi\,k}{2}}}{\eps^2}\dint{\xi}\,,
\end{align}
that means we have $\widehat{\calB_\eps V}\at{k}=b_\eps\at{k}\,\widehat{V}\at{k}$
for any $V\in\mathsf{L}^2\at{\mathbb{R}}$ and almost all $k\in\mathbb{R}$ .
\end{lemma}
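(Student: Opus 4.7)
My strategy is to prove the symbol formula first and then to read the remaining four properties directly off the explicit expression for $b_\eps$. This Fourier route is natural because $\calA_{\eps\xi}$ already diagonalizes in Fourier space by item 5 of Lemma \ref{properties_A_eta} with symbol $\operatorname{sinc}(\eps\xi k/2)$, so the work is essentially bookkeeping once the symbol of $\calB_\eps$ has been identified.

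First I would derive \eqref{symbol_b_eps_peridynamics}. Since the Fourier transform $\calF:\mathsf{L}^2\at{\mathbb{R}}\to\mathsf{L}^2\at{\mathbb{R}}$ is bounded and linear, it commutes with Bochner integrals; invoking the appendix (Proposition \ref{prop:Bochner1}) to justify the interchange, I would apply $\calF$ under the integral in \eqref{operator_B_eps_peridynamics} and use $\widehat{\calA_{\eps\xi}^2 W}\at{k}=\operatorname{sinc}^2\at{\eps\xi k/2}\widehat{W}\at{k}$ to conclude that $\widehat{\calB_\eps W}\at{k}=b_\eps\at{k}\widehat{W}\at{k}$ for almost every $k\in\mathbb{R}$. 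Boundedness of $\calB_\eps$ on $\mathsf{L}^2$ then follows via Plancherel from the trivial estimate $0\leq 1-\operatorname{sinc}^2\at{z}\leq 1$ for all real $z$, which combined with \eqref{integral_alpha}$_1$ gives the (non-uniform) bound $\|b_\eps\|_\infty\leq 1+\eps^{-2}\int_0^\infty\alpha\at\xi\,\xi^2\dint\xi<\infty$ for every fixed $\eps>0$. Self-adjointness is immediate from the fact that $b_\eps$ is real-valued, or equivalently from item 6 of Lemma \ref{properties_A_eta} together with the fact that taking adjoints commutes with Bochner integrals.

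The most interesting point---and in fact the one that matters for the later fixed-point argument---is the uniform invertibility, and this is where the specific structure of $b_\eps$ really pays off. Because $|\operatorname{sinc}\at{z}|\leq 1$ for every real $z$ and $\alpha\at\xi>0$ by Assumption \ref{assumption_1}, the integrand in \eqref{symbol_b_eps_peridynamics} is pointwise nonnegative, giving the lower bound $b_\eps\at{k}\geq 1$ uniformly in $\eps>0$ and $k\in\mathbb{R}$. Consequently $1/b_\eps$ is a bounded Fourier multiplier with norm at most one, and by Plancherel $\calB_\eps$ is invertible with $\|\calB_\eps^{-1}\|_{\mathsf{L}^2\to\mathsf{L}^2}\leq 1$, independently of $\eps$.

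Finally, for the invariance of the even subspace I would simply observe that $\chi_{\eps\xi}$ is an even kernel, so convolution with it preserves evenness, and hence $\calA_{\eps\xi}^2 W$ is even whenever $W$ is; the Bochner integral in \eqref{operator_B_eps_peridynamics} inherits this property from its integrand. I do not anticipate any genuine obstacle: every claim is a soft consequence of the symbol formula once one has it. The only step requiring care is the rigorous justification that $\calF$ can be pulled through the Bochner integral defining $\calB_\eps$, and this is precisely the kind of statement covered by the general theory recalled in the appendix.
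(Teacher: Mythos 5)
Your argument is correct, but it takes a genuinely different route to the symbol formula than the paper. The paper does not pull the Fourier transform through the Bochner integral directly: Proposition~\ref{prop:Bochner2} first establishes $\widehat{\calB_\eps V}=b_\eps\widehat V$ for $V\in\mathsf{L}^1\cap\mathsf{L}^2\cap\mathsf{BC}$ by an explicit Fubini interchange of the $\xi$- and $x$-integrals, and then passes to general $V\in\mathsf{L}^2$ by density, using the estimate from Proposition~\ref{prop:Bochner1} and Parseval to control $\|\calB_\eps V_n-\calB_\eps V\|_2$. Your shortcut---that a bounded linear operator (here $\calF$) commutes with Bochner integrals---is correct (it is Hille's theorem), and it indeed collapses the two-step argument to one line; but note that the reference you cite, Proposition~\ref{prop:Bochner1}, only establishes Bochner integrability of the integrand, not the interchange itself, so the proper citation would be to Hille's theorem or to the general theory in \cite{HN16,R20}. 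There is also a small residual step you gloss over: Hille's theorem gives an identity of $\mathsf{L}^2$-valued Bochner integrals, and to read off the a.e.\ pointwise formula $\widehat{\calB_\eps V}\at{k}=b_\eps\at{k}\widehat V\at{k}$ one still needs a Fubini-type evaluation to identify the Bochner integral of $\xi\mapsto\operatorname{sinc}^2(\eps\xi\cdot/2)\widehat V$ with the pointwise $\xi$-integral---precisely what the paper's Fubini argument makes explicit. For the remaining claims your Fourier-multiplier bookkeeping (boundedness and self-adjointness from $b_\eps\in\mathsf{L}^\infty$ real-valued, uniform invertibility from $b_\eps\geq 1$, evenness from the even kernel) is a clean and valid alternative to the paper's citations of Proposition~\ref{prop:Bochner1} and Lemma~\ref{properties_A_eta}, and the uniform lower bound $b_\eps\geq1$ used for invertibility coincides exactly with the paper's argument.
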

\begin{proof}
The continuity of $\calB_\eps$ is a direct consequence of the estimates within the proof of Proposition \ref{prop:Bochner1} and the self-adjointness  as well as the invariance of even functions under $\calB_\eps$ follow from the corresponding properties of $\calA_\eta$, see Lemma \ref{properties_A_eta}. Moreover, the existence of $b_\eps$ and the validity of (\ref{symbol_b_eps_peridynamics}) are shown in Proposition \ref{prop:Bochner2}. Since $0\leq 1-\operatorname{sinc}^2\at{y}\leq 1$ holds for all $y\in\mathbb{R}$ we have
\begin{align*}
1\leq b_\eps\at{k}\leq 1+\frac{1}{\eps^2}\,\int_0^\infty \alpha\at{\xi}\,\xi^2\dint{\xi}<\infty
\end{align*}
for any $k\in\mathbb{R}$ due to (\ref{integral_alpha}) and hence $b_\eps^{-1}\at{k}=1/b_\eps\at{k}\leq 1$. This proves the existence of the inverse operator $\calB_\eps^{-1}$ with symbol function  $b_\eps^{-1}$ as well as the uniform bound for its operator norm.
\end{proof}
\begin{figure}[ht]
\centering
\includegraphics[width=0.95\textwidth]{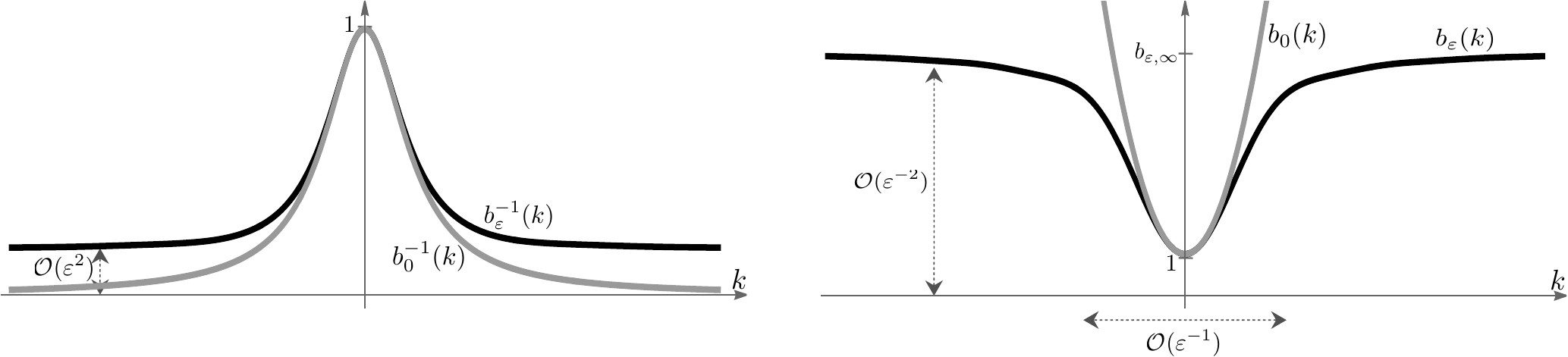}%
\caption{The symbol functions $b_\eps$ (black) from \eqref{symbol_b_eps_peridynamics} and their pointwise limit $b_0$ (gray).%
}
\label{fig_symbol_b_eps_b_0}
\end{figure}
We now study the asymptotic properties of $\calB_\eps$. Inserting the formal expansion (\ref{singular_expansion_A_eta_peridynamics}) into (\ref{operator_B_eps_peridynamics}) and passing to the limit $\eps\to 0$ we obtain 
\begin{align}\notag
\calB_0 W:= W-\left(\frac{1}{12}\int_0^\infty \alpha\at{\xi}\,\xi^4\dint{\xi}\right)\partial_x^2\,W
\end{align}
for any sufficiently smooth function $W$ as well as  $\widehat{\calB_0 W}\at{k}=b_0\at{k}\,\widehat{W}\at{k}$ with
\begin{align}\notag
b_0\at{k}=1+\left(\frac{1}{12}\int_0^\infty \alpha\at{\xi}\,\xi^4\dint{\xi}\right)\,k^2\,.
\end{align}
However, the operator $\calB_0$ is not a regular but a singular limit of $\calB_\eps$ since $b_\eps$ does not converge uniformly to $b_0$ as $\eps\to0$ as illustrated in Figure \ref{fig_symbol_b_eps_b_0}. In particular, we have
\begin{align}\notag
b_\eps\at{k}\xrightarrow{\;k\to\pm\infty\;} 1+\frac{1}{\eps^2}\,\int_0^\infty \alpha\at{\xi}\,\xi^2\dint{\xi}=: b_{\eps, \infty}\,,\qquad b_0\at{k}\xrightarrow{\;k\to\pm\infty\;}\infty\,.
\end{align}
Moreover, the positivity and the quadratic growth of $b_0$ imply that $\calB_0$ is defined on the smaller set $\mathsf{W}^{2,2}\at{\mathbb{R}}$ and admits an inverse with nice smoothing properties while $\calB_\eps^{-1}$ is less regularizing since $b_\eps$ approaches for $k\to\pm\infty$ a constant value of order $\calO\nat{\eps^2}$.
Nonetheless, the operator $\calB_\eps$ is still a sufficiently nice counterpart to $\calB_0$ as shown by the 
following three results.
\begin{lemma}[convergence of $\calB_\eps$]
\label{lemma_convergence_B_eps_to_B_0}
The convergence
\begin{align*}
b_\eps\at{k}\xrightarrow{\eps\to 0} b_0\at{k}
\end{align*} 
holds pointwise for any fixed $k\in\mathbb{R}$. Moroever, for any $W\in \mathsf{W}^{2,2}\at{\mathbb{R}}$ we have
\begin{align*}
\|\calB_\eps W-\calB_0 W\|_2 \xrightarrow{\eps \to 0} 0
\end{align*}
and for $W\in \mathsf{W}^{4,2}\at{\mathbb{R}}$ we even get
\begin{align*}
\|\calB_\eps W-\calB_0 W\|_2 \leq C\,\eps^2\,.
\end{align*}
\end{lemma}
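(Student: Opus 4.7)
The plan is to work entirely on the Fourier side and reduce all three assertions to properties of the symbol difference. Setting $y=\eps\xi k/2$ and introducing the auxiliary function
\begin{align*}
h\at{y}\deq \frac{1-\operatorname{sinc}^2\at{y}}{y^2}-\frac{1}{3}\,,\qquad h\at{0}\deq0\,,
\end{align*}
a direct computation based on \eqref{symbol_b_eps_peridynamics} yields the identity
\begin{align*}
b_\eps\at{k}-b_0\at{k}=\frac{k^2}{4}\int_0^\infty\alpha\at{\xi}\,\xi^4\,h\at{\eps\xi k/2}\dint{\xi}\,,
\end{align*}
so the entire argument hinges on two elementary bounds for $h$: the global bound $\abs{h\at{y}}\leq C_0$, which follows from the boundedness of $1-\operatorname{sinc}^2$ on $\Rset$ together with $\operatorname{sinc}^2\at{y}/y^2\to0$ as $\abs{y}\to\infty$, and the global quadratic estimate $\abs{h\at{y}}\leq C\,y^2$. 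For the latter, a Taylor expansion of $\operatorname{sinc}^2$ at the origin gives $\abs{h\at{y}}\leq Cy^2$ for $\abs{y}\leq 1$, while for $\abs{y}\geq 1$ the uniform bound combined with $y^2\geq 1$ yields the same estimate.

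With these inequalities in hand, the three claims follow in quick succession. For the pointwise convergence $b_\eps\at{k}\to b_0\at{k}$, I would fix $k$ and apply dominated convergence inside the $\xi$-integral: the integrand tends to zero pointwise in $\xi$ and is dominated by $C_0\,\alpha\at{\xi}\,\xi^4$, which is integrable by the second moment condition in \eqref{integral_alpha}. The bound $\abs{h\at{y}}\leq C_0$ combined with the same moment condition also yields the $\eps$-uniform estimate $\abs{b_\eps\at{k}}\leq 1+Ck^2$, so that $\abs{b_\eps-b_0}^2\abs{\widehat W}^2\leq C\at{1+k^2}^2\abs{\widehat W}^2$; this majorant is integrable precisely for $W\in\mathsf{W}^{2,2}\at{\Rset}$, and Plancherel's theorem combined with dominated convergence delivers the second assertion. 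Finally, the quadratic bound for $h$ produces
\begin{align*}
\abs{b_\eps\at{k}-b_0\at{k}}\leq C\,\eps^2\,k^4\int_0^\infty\alpha\at{\xi}\,\xi^6\dint{\xi}\,,
\end{align*}
where the $\xi$-integral is finite by the third moment condition in \eqref{integral_alpha}. Plancherel then converts this into $\norm{\calB_\eps W-\calB_0 W}_2\leq C\eps^2\norm{W}_{\mathsf{W}^{4,2}}$, which is the desired quantitative bound.

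The only mildly delicate ingredient is the global quadratic estimate $\abs{h\at{y}}\leq Cy^2$: it is precisely this upgrade from a local Taylor expansion to a uniform inequality that turns the formal asymptotic \eqref{singular_expansion_A_eta_peridynamics} into a quantitative convergence rate and, in doing so, pins the required regularity to $\mathsf{W}^{4,2}$ and the required moment to $\int_0^\infty\alpha\at{\xi}\,\xi^6\dint{\xi}$. Everything else reduces to standard Fourier manipulations and two invocations of Lebesgue's dominated convergence theorem.
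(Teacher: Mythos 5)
Your proof is correct, and for the first two assertions it is essentially the same argument as the paper's (a pointwise symbol bound of the form $\abs{b_\eps\at{k}-b_0\at{k}}\leq C\,k^2$, followed by Plancherel and dominated convergence). For the third assertion you genuinely diverge: you stay on the Fourier side, factor the symbol difference as
\begin{align*}
b_\eps\at{k}-b_0\at{k}=\frac{k^2}{4}\int_0^\infty\alpha\at{\xi}\,\xi^4\,h\at{\eps\xi k/2}\dint{\xi}
\end{align*}
and feed in the global quadratic bound $\abs{h\at{y}}\leq C\,y^2$, whereas the paper returns to physical space and estimates
\begin{align*}
\left\|\calA_{\eps\xi}^2W-W-\tfrac{\eps^2\xi^2}{12}W''\right\|_2\leq C\,\eps^4\,\xi^4\,\|W''''\|_2
\end{align*}
via the Taylor-type operator bounds in Lemma \ref{properties_A_eta}, then integrates in $\xi$ using the Bochner machinery. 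The two routes are equivalent in content (both ultimately rest on the fourth-order Taylor remainder of $\operatorname{sinc}^2$ and both burn exactly $\int\alpha\,\xi^6\dint\xi<\infty$ and $W\in\mathsf{W}^{4,2}$), but yours is more self-contained and unified, reducing everything to two elementary scalar inequalities for $h$, while the paper's is the natural continuation of the physical-space Bochner framework used throughout the rest of the argument. One small wording issue: your justification of the global bound $\abs{h\at{y}}\leq C_0$ is a bit muddled — the clean argument is that $h$ is continuous on $\Rset$ with $h\at{0}=0$ (by the Taylor expansion of $\operatorname{sinc}^2$) and satisfies $\abs{h\at{y}}\leq 1/y^2+1/3$ for $\abs{y}\geq1$ (since $0\leq 1-\operatorname{sinc}^2\leq1$), hence is bounded; note also that you establish this global bound first and only then derive $\abs{h\at{y}}\leq Cy^2$ on $\abs{y}\geq1$ from it, which is the right order and avoids circularity.
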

\begin{proof}
Using the Taylor expansion $\operatorname{sinc}^2\at{y}=1-\frac{y^2}{3}+\calO\at{y^4}$
we obtain
\begin{align*}
b_\eps\at{k}
=1+\int_0^\infty \alpha\at{\xi}\,\frac{\xi^4}{12}\,k^2+\calO\at{\eps^2}\dint{\xi}
\xrightarrow{\;\eps\to 0\;} 1+\frac{\int_0^\infty \alpha\at{\xi}\,\xi^4\dint{\xi}}{12}\,k^2=b_0\at{k}
\end{align*}
for all $k\in\mathbb{R}$ and hence the claimed  pointwise convergence. Moreover,  Parseval's Theorem combined with Proposition \ref{prop:Bochner2} implies 
\begin{align}
\label{lemma_convergence_B_eps_to_B_0-e1}
\|\calB_\eps W-\calB_0 W\|_2^2
&= \frac{1}{2\,\pi} \int_{\mathbb{R}} h_\eps\at{k} \dint{k}\,,\qquad h_\eps\at{k}:=|b_\eps\at{k}-b_0\at{k}|^2\,  |\widehat{W}\at{k}|^2\,.
\end{align}
The auxiliary estimate (see Figure \ref{fig_lower_bound_b_eps} for an illustration)
\begin{align*}
0\leq \frac{1}{3}\,y^2 -\left(1-\operatorname{sinc}^2\at{y}\right) \leq \frac{1}{3}\,y^2
\end{align*}
combined with $y=\eps\xi k/2$ ensures
\begin{align}\notag
0\leq b_0\at{k}-b_\eps\at{k}
\leq\frac{1}{12}\,k^2\int_0^\infty \alpha\at{\xi}\,\xi^4\dint{\xi}
\leq C\,\at{1+k^2}
\end{align}
for all $k\in\mathbb{R}$ and we obtain
\begin{align*}
h_\eps\at{k}\leq C \left(1+k^2\right)^2  |\widehat{W}\at{k}|^2\,,\qquad 
\int_{\mathbb{R}} h_\eps\at{k}\dint{k}\leq C\,\|W\|_{2,2}^2\,.
\end{align*}
The second claim is thus a direct consequence of \eqref{lemma_convergence_B_eps_to_B_0-e1}, the pointwise convergence  $b_0\at{k}=\lim_{\eps\to} b_\eps\at{k}$, and the Dominated Convergence Theorem. Finally, let $W\in\mathsf{W}^{4,2}\at{\mathbb{R}}$ be fixed. Proposition \ref{prop:Bochner1} provides
\begin{align*}
\|\calB_\eps W-\calB_0 W\|_2
&\leq\frac{1}{\eps^2}\int_0^\infty \alpha\at{\xi}\,\xi^2\left\|\calA_{\eps\xi}^2 W-W-\frac{\eps^2\,\xi^2}{12}\,W''\right\|_2\dint{\xi} 
\end{align*}
and using
\begin{align*}
\calA_{\eps\xi}^2W-W-\frac{\eps^2\,\xi^2}{12}\,W''
=&\left(\calA_{\eps\xi}+\operatorname{Id}\right) \left(\calA_{\eps\xi} W-W-\frac{\eps^2\,\xi^2}{24}\,W''\right)+\frac{\eps^2\,\xi^2}{24}\left(\calA_{\eps\xi}W''-W''\right)
\end{align*}
we get
\begin{align*}
\left\|\calA_{\eps\xi}^2 W-W-\frac{\eps^2\,\xi^2}{12}\,W''\right\|_2
&\leq 2\,\left\|\calA_{\eps\xi} W-W-\frac{\eps^2\,\xi^2}{24}\,W''\right\|_2
+\frac{\eps^2\,\xi^2}{24}\,\|\calA_{\eps\xi}W''-W''\|_2\\
&\leq 2\,C\,\eps^4\,\xi^4\,\|W''''\|_2+C\,\frac{\eps^4\,\xi^4}{24}\,\|W''''\|_2\\
&\leq C\,\eps^4\,\xi^4
\end{align*}
for all $\xi\in\pair{0}{\infty}$ thanks to (\ref{estimate_A_eta_1})$_2$, where we applied (\ref{error_A_eta_1})$_1$ to $W''$ as well as (\ref{error_A_eta_1})$_2$ to $W$. The third claim now follows in view of \eqref{integral_alpha}.
\end{proof}
\begin{figure}[ht]
\centering
\includegraphics[width=0.95\textwidth]{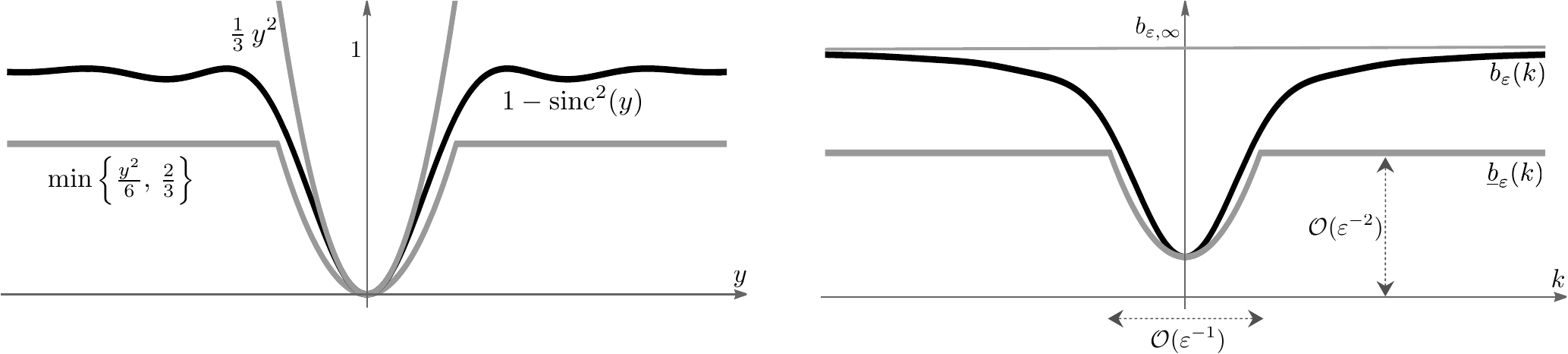}%
\caption{%
On the construction of the pointwise bounds for $b_\eps$ in the proofs of Lemma \ref{lemma_convergence_B_eps_to_B_0} and  Lemma \ref{lemma_lower_bound_b_eps}.
}%
\label{fig_lower_bound_b_eps}
\end{figure}
We next show that  $\calB_\eps^{-1}$ can be written as the sum of an almost compact operator and a small bounded one. A similar result has been derived for atomic chains in \cite{HML16} but the technical details in the peridynamical setting are more involved due to the continuum of bond variables. We start with an auxiliary result that allows us to replace the function $b_\eps$ by a simpler one and refer to the right panel in Figure \ref{fig_lower_bound_b_eps} for an illustration.
\begin{lemma}\label{lemma_lower_bound_b_eps}
For all $k\in\mathbb{R}$ and $\eps>0$ we have
\begin{align*}
b_\eps\at{k}
\geq \underline{b_\eps}\at{k}
:=C_0
 \begin{cases}
1+k^2 & \text{ for } |k|\leq\frac{C_1}{\eps}\,,\\
1+\frac{1}{\eps^2} & \text{ for } |k|>\frac{C_1}{\eps}\,,
\end{cases}
\end{align*}
where the positive constants $C_0$, $C_1$ do not depend on $\eps$.
\end{lemma}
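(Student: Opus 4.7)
The plan is to deduce the desired two-regime lower bound from a pointwise auxiliary estimate of the form $1-\operatorname{sinc}^2(y)\geq c\,\min\at{y^2,\,1}$ for some universal $c>0$, combined with a careful splitting of the $\xi$-integral in \eqref{symbol_b_eps_peridynamics}.

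First I would establish the auxiliary inequality. For $|y|$ small, the Taylor expansion $\operatorname{sinc}^2\at{y}=1-\tfrac{y^2}{3}+\calO\at{y^4}$ implies $1-\operatorname{sinc}^2\at{y}\geq \tfrac{1}{4}y^2$, say, for $|y|\leq y_0$ with some fixed $y_0>0$. For $|y|\geq y_0$, the bound $\operatorname{sinc}^2\at{y}\leq 1/y^2\leq 1/y_0^2$ yields a strictly positive lower bound on $1-\operatorname{sinc}^2\at{y}$. Combining both, there exists $c>0$ (depending only on $y_0$) such that $1-\operatorname{sinc}^2\at{y}\geq c\,\min\at{y^2,\,1}$ for all $y\in\Rset$; compare the right panel of Figure \ref{fig_lower_bound_b_eps}.

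Next, I would substitute this bound into \eqref{symbol_b_eps_peridynamics} with $y=\eps\xi k/2$ and split the integration domain at $\xi=2y_0/\at{\eps\nabs{k}}$, which is exactly the boundary between the two regimes $|y|\leq y_0$ and $|y|\geq y_0$. For $\xi$ below the threshold the integrand is bounded below by $c\,\alpha\at{\xi}\,\xi^2\cdot\xi^2 k^2/4$, and for $\xi$ above by $c\,\alpha\at{\xi}\,\xi^2/\eps^2$. I would then choose $C_1$ such that the threshold $2y_0/C_1$ is a convenient fixed constant $\xi_\ast>0$.

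In the regime $\nabs{k}\leq C_1/\eps$ the threshold $2y_0/\at{\eps\nabs{k}}$ is at least $\xi_\ast$, so restricting the first part to $\xi\in\ccinterval{0}{\xi_\ast}$ yields
\begin{align*}
b_\eps\at{k}-1\;\geq\;\tfrac{c\,k^2}{4}\int_0^{\xi_\ast}\alpha\at{\xi}\,\xi^4\dint{\xi}\;=:\;c'\,k^2,
\end{align*}
and combining with the trivial term $1$ gives $b_\eps\at{k}\geq\min\at{1,\,c'}\at{1+k^2}$. In the regime $\nabs{k}>C_1/\eps$ the threshold is below $\xi_\ast$, so restricting the second part to $\xi\in\cointerval{\xi_\ast}{\infty}$ yields
\begin{align*}
b_\eps\at{k}-1\;\geq\;\frac{c}{\eps^2}\int_{\xi_\ast}^{\infty}\alpha\at{\xi}\,\xi^2\dint{\xi}\;=:\;\frac{c''}{\eps^2},
\end{align*}
and hence $b_\eps\at{k}\geq\min\at{1,\,c''}\at{1+1/\eps^2}$. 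Both positive constants $c',c''$ are genuine because $\alpha$ is piecewise continuous and strictly positive and the integrals in \eqref{integral_alpha} are finite, so $\alpha$ does not vanish on the relevant intervals. Setting $C_0:=\min\at{1,\,c',\,c''}$ delivers the claimed bound.

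The only mildly delicate step is the auxiliary pointwise inequality, since $1-\operatorname{sinc}^2$ vanishes quadratically at $0$ and only to second order at $\pm\infty$ in the envelope sense; but once $y_0$ is pinned down concretely (e.g.\ numerically) the two-regime splitting is straightforward, and the choice $\xi_\ast=2y_0/C_1$ makes the two lower bounds meet at $\nabs{k}=C_1/\eps$.
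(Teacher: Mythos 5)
Your argument is correct in substance and uses the same key idea as the paper, namely a two-regime pointwise lower bound on $1-\operatorname{sinc}^2$, with the crossover point determining $C_1$. The route you take to exploit it is, however, slightly different, and this is worth a brief comparison.

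The paper first restricts the $\xi$-integral to a fixed interval $\ccinterval{h}{H}$ on which $\alpha$ is positive, and then applies the elementary bound $1-\operatorname{sinc}^2\at{\xi\,y}\geq\min\at{h^2\,y^2/6,\,2/3}$ with $y=\eps k/2$; because the lower bound is uniform in $\xi\in\ccinterval{h}{H}$ the subsequent integration against $\alpha\at\xi\,\xi^2$ is immediate, and the case distinction in $k$ comes solely from the $\min$. You instead keep the full integration in $\xi$ and split the domain at the $k$-dependent threshold $\xi_0=2y_0/\at{\eps\nabs{k}}$, discarding one of the two pieces depending on whether $\nabs{k}\leq C_1/\eps$ or not; you then fix $C_1$ so that $\xi_*=2y_0/C_1$ becomes a convenient absolute constant. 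Both routes are valid and produce the same crossover scale $\nabs{k}\sim1/\eps$. One structural advantage of the paper's version is robustness: choosing $\ccinterval{h}{H}$ at the outset guarantees that the residual integrals are strictly positive even if $\alpha$ vanishes outside a compact set (which is the natural situation for a finite interaction horizon, cf.\ the example after Assumption~\ref{assumption_1}). Your argument requires in addition that $\int_0^{\xi_*}\alpha\at\xi\,\xi^4\dint\xi$ and $\int_{\xi_*}^\infty\alpha\at\xi\,\xi^2\dint\xi$ are both strictly positive; this is automatic if you read the assumption literally as $\alpha>0$ everywhere, but if one only knows that $\alpha>0$ on some interval, you would need to choose $\xi_*$ inside that interval, which is essentially what the paper does via $h$ and $H$. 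Finally, for the auxiliary inequality $1-\operatorname{sinc}^2\at{y}\geq c\min\at{y^2,1}$ you should, to keep the argument self-contained, either quote the explicit form $1-\operatorname{sinc}^2\at{y}\geq\min\at{y^2/6,\,2/3}$ the paper uses or give a short proof; the remark ``once $y_0$ is pinned down concretely (e.g.\ numerically)'' should be replaced by an actual elementary verification.
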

\begin{proof}
We choose $0<h<H<\infty$ such that $\al\at\xi>0$ holds for all $\xi$ with $h\leq \xi\leq H$ and estimate
\begin{align}\notag
b_\eps\at{k}
\geq 1+\int_h^H \alpha\at{\xi}\,\xi^2\,\frac{1-\operatorname{sinc}^2\at{\frac{\eps\,\xi\,k}{2}}}{\eps^2}\, \dint{\xi}
=: b_{\eps,h,H}\at{k}\,.
\end{align} 
Moreover, the properties of the sinus cardinalis (see the left panel in Figure \ref{fig_lower_bound_b_eps}) imply
\begin{align*}
1
\geq 1-\operatorname{sinc}^2\at{\xi\,y}
\geq \min\left\{\frac{\xi^2\,y^2}{6},\, \frac{2}{3}\right\}
\geq \min\left\{\frac{h^2\,y^2}{6},\, \frac{2}{3}\right\}
\end{align*}
and with $y=\eps\,k/2$ we obtain
\begin{align*}
1-\operatorname{sinc}^2\at{\frac{\eps\,\xi\,k}{2}}
\geq \begin{cases}
\frac{\eps^2\,h^2\,k^2}{24} & \text{ for } |k|\leq \frac{4}{\eps\,h}\,,\\
\frac{2}{3}  & \text{ for } |k|> \frac{4}{\eps\,h}\,.
\end{cases}
\end{align*}
The combination of the partial estimates gives
\begin{align*}
b_{\eps, h, H}\at{k}
&\geq C_0
\begin{cases}
1+k^2 &\text{ for } |k|\leq \frac{4}{\eps\,h}\,,\\
1+\frac{1}{\eps^2} &\text{ for } |k|> \frac{4}{\eps\,h}
\end{cases}
\end{align*}
and setting $C_1=4/h$ completes the proof.
\end{proof}
Using the constant $C_1$ from Lemma \ref{lemma_lower_bound_b_eps} we define the cut-off operator
\begin{align}\label{cut-off_operator_Pi_eps}
\Pi_\eps: \mathsf{L}^2\at{\mathbb{R}}\to \mathsf{L}^2\at{\mathbb{R}}
\end{align}
by its symbol function
\begin{align}\notag
\pi_\eps\at{k}:=\begin{cases}
1                      &\text{ for } |k|\leq \frac{C_1}{\eps}\,,\\
0                      &\text{ for } |k|>\frac{C_1}{\eps}
\end{cases}
\end{align}
and derive the following result.
\begin{proposition}[$\calB_\eps^{-1}$ and cut-off in Fourier space]\label{proposition_cut_off_B_eps^-1}
For any $0<\eps\leq 1$ and all $G\in\mathsf{L}^2\at{\mathbb{R}}$ we have
\begin{align}\notag
\left\| \Pi_\eps\,\calB_\eps^{-1} G\right\|_{2,2}
+\frac{1}{\eps^2}\left\|\left(\operatorname{Id}-\Pi_\eps\right)\calB_\eps^{-1} G\right\|_2\leq D\|G\|_2\,,
\end{align}
for some constant $D>0$ independent of $\eps$, where $\|\cdot\|_{2,2}$ denotes the norm in $\mathsf{W}^{2,2}\at{\mathbb{R}}$.
\end{proposition}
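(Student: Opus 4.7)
The plan is to work entirely in Fourier space and exploit the region-wise lower bound on $b_\eps$ provided by Lemma \ref{lemma_lower_bound_b_eps}. Since $\calB_\eps^{-1}$ and $\Pi_\eps$ are both pseudo-differential operators with symbols $1/b_\eps\at{k}$ and $\pi_\eps\at{k}$, the Plancherel identity reduces both summands to weighted $\mathsf{L}^2$-integrals of $|\widehat{G}\at{k}|^2$ over the two disjoint regions $\nabs{k}\leq C_1/\eps$ and $\nabs{k}>C_1/\eps$. Moreover, the $\mathsf{W}^{2,2}$-norm is equivalent (up to $2\pi$) to the Fourier-side weighted norm with factor $\at{1+k^2}^2$, so there is no loss in expressing everything via Parseval.

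For the first summand I would estimate
\begin{align*}
\bnorm{\Pi_\eps\calB_\eps^{-1}G}_{2,2}^2
&\leq \frac{C}{2\pi}\int_{\nabs{k}\leq C_1/\eps}\frac{\at{1+k^2}^2}{b_\eps\at{k}^2}\,\babs{\widehat{G}\at{k}}^2\dint{k}\,.
\end{align*}
On this region Lemma \ref{lemma_lower_bound_b_eps} yields $b_\eps\at{k}\geq C_0\at{1+k^2}$, so the weights cancel exactly and the integral is dominated by $C\,C_0^{-2}\,\nnorm{G}_2^2$.

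For the second summand the prefactor $\eps^{-2}$ (squared to $\eps^{-4}$) must be absorbed. On $\nabs{k}>C_1/\eps$ the lemma gives $b_\eps\at{k}\geq C_0\at{1+\eps^{-2}}\geq C_0\,\eps^{-2}$, hence $1/b_\eps\at{k}^2\leq C_0^{-2}\,\eps^4$. Thus
\begin{align*}
\frac{1}{\eps^4}\bnorm{\at{\operatorname{Id}-\Pi_\eps}\calB_\eps^{-1}G}_2^2
&\leq \frac{1}{2\pi\eps^4}\int_{\nabs{k}>C_1/\eps}\frac{1}{b_\eps\at{k}^2}\,\babs{\widehat{G}\at{k}}^2\dint{k}
\leq \frac{1}{2\pi\,C_0^2}\nnorm{G}_2^2\,,
\end{align*}
where the $\eps^4$ from the symbol bound cancels the $\eps^{-4}$ prefactor.

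Adding the two estimates and taking square roots gives the claim with a constant $D$ depending only on $C_0$ and the equivalence constant between the $\mathsf{W}^{2,2}$-norm and the $\at{1+k^2}$-weighted $\mathsf{L}^2$-norm. I do not foresee a real obstacle: the work is essentially bookkeeping, and the crucial ingredient — the two-scale lower bound on $b_\eps$ — is already isolated in Lemma \ref{lemma_lower_bound_b_eps}. The only point one should be careful with is that the sharp cut-off $\Pi_\eps$ commutes with $\calB_\eps^{-1}$ (both diagonal in Fourier) and that the $\eps^{-2}$ scaling is chosen precisely so that the low-regularity tail $\at{\operatorname{Id}-\Pi_\eps}\calB_\eps^{-1}$ produces the same order of bound as the regular low-frequency part.
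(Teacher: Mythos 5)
Your argument is correct and matches the paper's proof essentially verbatim: both work in Fourier space via Parseval, split the integral at $\nabs{k}=C_1/\eps$, and invoke Lemma~\ref{lemma_lower_bound_b_eps} to bound $b_\eps$ from below by $C_0(1+k^2)$ on the low-frequency region and by $C_0\eps^{-2}$ on the high-frequency region. The only cosmetic difference is that the paper writes the Sobolev weight as $1+k^2+k^4$ where you use the equivalent $(1+k^2)^2$; this does not affect the argument.
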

\begin{proof}
Using Lemma \ref{lemma_lower_bound_b_eps}, Fourier transform, and Parseval's theorem we obtain
\begin{align*}
\left\| \Pi_\eps\,\calB_\eps^{-1} G\right\|_{2,2}^2
&=\frac{1}{2\,\pi} \int\limits_{|k|\leq \frac{C_1}{\eps}} \frac{\left(1+k^2+k^4\right)|\widehat{G}\at{k}|^2}{b_\eps^2\at{k}}\dint{k}
\leq \frac{1}{2\,\pi} \int\limits_{|k|\leq \frac{C_1}{\eps}} \frac{\left(1+k^2+k^4\right)|\widehat{G}\at{k}|^2}{C_0^2\left(1+k^2\right)^2}\dint{k}
\leq\frac{\|G\|_2^2}{C_0^2}
\end{align*}
as well as
\begin{align*}
\left\|\left(\operatorname{Id}-\Pi_\eps\right)\calB_\eps^{-1} G\right\|_2^2
&=\frac{1}{2\,\pi} \int_{|k|> \frac{C_1}{\eps}} \,\frac{|\widehat{G}\at{k}|^2}{b_\eps^2\at{k}}\dint{k}
\leq \eps^4\frac{\|G\|_2^2}{C_0^2}\,.
\end{align*}
The desired estimate now follows with $D:=2/C_0$.
\end{proof}
Although the operator $\Pi_\eps\,\calB_\eps^{-1}$ is much more regular than $\calB_\eps^{-1}$, it is not yet compact. In the proof of Proposition \ref{proposition_uniform_invertibility} we therefore  introduce an additional cut-off in position space.
%
%
\subsection{Properties of the operators \texorpdfstring{$\calQ_\eps$}{Q} and \texorpdfstring{$\calP_\eps$}{P}}\label{subsection_operators_properties_2}
%
%
In order to show that the operator equation (\ref{operator_equation_peridynamics}) transforms for $\eps\to0$ into the ODE (\ref{KdV_equation_peridynamic}), we must characterize the limiting behaviour of $\calQ_\eps$ and define
\begin{align}\notag
\calQ_0[W]:=\left(\int_0^\infty \beta\at{\xi}\,\xi^3\dint{\xi}\right)W^2\,.
\end{align}
\begin{lemma}[convergence of $\calQ_\eps$]\label{lemma_convergence_Q_eps}
We have 
\begin{align*}
\|\calQ_\eps [W]- \calQ_0 [W]\|_2 \xrightarrow{\;\eps \to 0\;} 0
\end{align*}
for all $W\in \mathsf{L}^2\at{\mathbb{R}}\cap \mathsf{L}^\infty\at{\mathbb{R}}$ and
\begin{align*}
\|\calQ_\eps[W]-\calQ_0[W]\|_2\leq C\,\eps^2
\end{align*} 
for all $W\in\mathsf{W}^{2,2}\at{\mathbb{R}}$.
\end{lemma}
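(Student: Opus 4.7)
The plan is to exploit the natural factorisation of the integrand in the Bochner integral defining $\calQ_\eps[W]-\calQ_0[W]$. Since $\calQ_0[W]=\at{\int_0^\infty\beta\at\xi\,\xi^3\dint{\xi}}W^2$, I would write
\begin{align*}
\calQ_\eps[W]-\calQ_0[W]=\int_0^\infty\beta\at\xi\,\xi^3\Bat{\calA_{\eps\xi}\at{\calA_{\eps\xi}W}^2-W^2}\dint{\xi}
\end{align*}
and decompose the inner bracket as
\begin{align*}
\calA_{\eps\xi}\at{\calA_{\eps\xi}W}^2-W^2=\calA_{\eps\xi}\Bat{\at{\calA_{\eps\xi}W-W}\at{\calA_{\eps\xi}W+W}}+\bat{\calA_{\eps\xi}W^2-W^2}.
\end{align*}
Taking $\mathsf{L}^2$-norms under the Bochner integral and exploiting $\|\calA_\eta\|_{\mathsf{L}^2\to\mathsf{L}^2}\leq 1$ from \eqref{estimate_A_eta_1} together with $\|fg\|_2\leq\|f\|_2\|g\|_\infty$ would reduce both claims to the master estimate
\begin{align*}
\|\calQ_\eps[W]-\calQ_0[W]\|_2\leq\int_0^\infty\beta\at\xi\,\xi^3\Bat{2\|W\|_\infty\|\calA_{\eps\xi}W-W\|_2+\|\calA_{\eps\xi}W^2-W^2\|_2}\dint{\xi}.
\end{align*}

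For the qualitative claim with $W\in\mathsf{L}^2\at{\R}\cap\mathsf{L}^\infty\at{\R}$, I would note that $W^2\in\mathsf{L}^2\at{\R}$ with $\|W^2\|_2\leq\|W\|_\infty\|W\|_2$, so the integrand of the master estimate is bounded pointwise in $\xi$ by $C\,\beta\at\xi\,\xi^3$, using $\|\calA_\eta V-V\|_2\leq 2\|V\|_2$ for $V=W$ and $V=W^2$. This majorant is integrable thanks to \eqref{integral_beta}$_1$, while for each fixed $\xi>0$ the strong convergence \eqref{convergence_A_etaW_to_W} applied separately to $W$ and to $W^2$ forces the integrand to vanish as $\eps\to 0$. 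Dominated convergence then yields the first assertion.

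For the quantitative claim with $W\in\mathsf{W}^{2,2}\at{\R}$, the first estimate in \eqref{error_A_eta_1} directly gives $\|\calA_{\eps\xi}W-W\|_2\leq C\eps^2\xi^2\|W''\|_2$. To handle the term $\|\calA_{\eps\xi}W^2-W^2\|_2$, I would first verify that $W^2\in\mathsf{W}^{2,2}\at{\R}$ by computing $(W^2)''=2WW''+2(W')^2$ and invoking the one-dimensional Sobolev embeddings $\mathsf{W}^{2,2}\at{\R}\hookrightarrow\mathsf{L}^\infty\at{\R}$ and $\mathsf{W}^{1,2}\at{\R}\hookrightarrow\mathsf{L}^4\at{\R}$ to conclude $\|(W^2)''\|_2\leq C\|W\|_{\mathsf{W}^{2,2}}^2$. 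A second application of \eqref{error_A_eta_1} then produces $\|\calA_{\eps\xi}W^2-W^2\|_2\leq C\eps^2\xi^2$. Substituting into the master estimate and integrating gives $\|\calQ_\eps[W]-\calQ_0[W]\|_2\leq C\eps^2\int_0^\infty\beta\at\xi\,\xi^5\dint{\xi}$, which is finite by \eqref{integral_beta}$_3$.

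The only mildly delicate point is the treatment of $\calA_{\eps\xi}W^2-W^2$, since the smoothing estimate \eqref{error_A_eta_1} naively requires $W^2\in\mathsf{W}^{2,2}\at{\R}$, which is not automatic from $W\in\mathsf{W}^{2,2}\at{\R}$ but follows from one-dimensional Sobolev embeddings. Everything else is a mechanical application of the linear smoothing properties of $\calA_\eta$ collected in Lemma \ref{properties_A_eta}, and the role of the integrability conditions \eqref{integral_beta} is transparent: the $\xi$-factors produced by the error bounds are precisely $\xi^3$ (for the first claim, by pointwise control) and $\xi^5$ (for the second claim, by the quadratic-in-$\eps\xi$ error).
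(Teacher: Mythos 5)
Your proposal is correct and follows essentially the same route as the paper: both use the decomposition $\calA_{\eps\xi}(\calA_{\eps\xi}W)^2-W^2=\calA_{\eps\xi}\bat{(\calA_{\eps\xi}W)^2-W^2}+\bat{\calA_{\eps\xi}W^2-W^2}$, bound the first term via $\nnorm{\calA_\eta}_{\mathsf{L}^2\to\mathsf{L}^2}\leq 1$ and $\|fg\|_2\leq\|f\|_2\|g\|_\infty$, apply dominated convergence for the qualitative claim and \eqref{error_A_eta_1} for the quantitative one, and invoke one-dimensional Sobolev embedding to control $\|(W^2)''\|_2$. The only cosmetic difference is that you package everything into an explicit master estimate while the paper phrases the first claim directly in the language of the Bochner dominated convergence theorem.
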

\begin{proof}
Let $W\in\mathsf{L}^2\at{\mathbb{R}}\cap \mathsf{L}^\infty\at{\mathbb{R}}$ be fixed. By Proposition \ref{prop:Bochner1}, the function $F_\eps: \pair{0}{\infty}\to\mathsf{L}^2\at{\mathbb{R}}$ with
\begin{align*}
F_\eps\at{\xi}:= \beta\at{\xi}\,\xi^3 \calA_{\eps\xi}\left(\calA_{\eps\xi} W\right)^2
\end{align*}
is Bochner integrable with respect to $\xi>0$ and the convergence result (\ref{convergence_A_etaW_to_W}) implies that
\begin{align*}
F_\eps\at{\xi}\xrightarrow{\;\eps\to 0\;} F_0\at{\xi}:=\beta\at{\xi}\,\xi^3\, W^2
\qquad
\text{ strongly in }
\;
 \mathsf{L}^2\at{\mathbb{R}}
\end{align*}
holds  pointwise in $\xi$. Moreover, from (\ref{estimate_A_eta_1})$_2$ with $p=2$ we deduce the estimate
\begin{align*}
\|F_\eps\at{\xi}\|_2 
\leq  \beta\at{\xi}\,\xi^3\,\|W\|_\infty\, \|W\|_2
=:g\at{\xi}\,,
\end{align*}
where $g:\pair{0}{\infty}\to \mathbb{R}$ is integrable according to (\ref{integral_beta})$_1$. The Dominated Convergence Theorem for Bochner integrals (see Theorem \ref{theorem_lebesgue_bochner}) guarantees that $F_0:\pair{0}{\infty}\to\mathsf{L}^2\at{\mathbb{R}}$ is Bochner integrable and the first assertion follows via
\begin{align*}
\|\calQ_\eps [W]-\calQ_0 [W]\|_2 \leq \int_0^\infty \|F_\eps\at{\xi}-F_0\at{\xi}\|_2 \dint{\xi}\xrightarrow{\;\eps\to 0\;} 0
\end{align*}
from Theorem \ref{theorem_bochner}. We now assume that $W$ belongs to $\mathsf{W}^{2,2}\at{\mathbb{R}}$. By (\ref{error_A_eta_1})$_1$ we have
\begin{align*}
\|\calA_{\eps\xi} W-W\|_2
\leq C\,\eps^2\,\xi^2\,\|W''\|_2
\leq C\,\eps^2\,\xi^2
\end{align*}
for all $\xi>0$ and the estimate
\begin{align*}
\|\calA_{\eps\xi} W^2-W^2\|_2
\leq C\,\eps^2\,\xi^2\, \|(W^2)''\|_2
\leq C\,\eps^2\,\xi^2
\end{align*}
can be justified using standard embedding theorems. In combination with (\ref{estimate_A_eta_1})$_2$ we get
\begin{align*}
\|\calA_{\eps\xi}\left(\calA_{\eps\xi} W\right)^2-W^2\|_2
&\leq \|\calA_{\eps\xi}\left(\calA_{\eps\xi} W\right)^2-\calA_{\eps\xi} W^2\|_2+\|\calA_{\eps\xi} W^2-W^2\|_2\\
&\leq \left( \|\calA_{\eps\xi}W\|_\infty +\|W\|_\infty \right)\|A_{\eps\xi} W-W\|_2+C\,\eps^2\,\xi^2\\
&\leq C\,\eps^2\,\xi^2
\end{align*}
and hence
\begin{align*}
\|\calQ_\eps[W]-\calQ_0[W]\|_2
\leq C \,\eps^2 \int_0^\infty \beta\at{\xi}\,\xi^5\dint{\xi}
\leq C\,\eps^2
\end{align*}
due to (\ref{integral_beta})$_3$.
\end{proof}
Passing to the formal limit $\eps\to 0$ in (\ref{operator_equation_peridynamics}) yields
\begin{align}
\label{operator_equation_B_0_Q_0_peridynamic}
\calB_0 W=\calQ_0[W]
\end{align}
since $\eps^2\,\calP_\eps[V]\xrightarrow{\;\eps\to 0\;} 0$  holds for all $W\in\mathsf{L}^2\at{\mathbb{R}}$ according to Proposition \ref{prop:Bochner1}. This equation is the operator representation of the KdV traveling wave equation (\ref{KdV_equation_peridynamic}) with parameters
\begin{align}
\label{KdV_equation_coeffizients}
d_1:=\frac{12}{\int_0^\infty\alpha\at{\xi}\,\xi^4\dint{\xi}}\,,\qquad\qquad
d_2:=\frac{12\int_0^\infty \beta\at{\xi}\,\xi^3\dint{\xi}}{\int_0^\infty \alpha\at{\xi}\, \xi^4\dint{\xi}}\,,
\end{align} 
and the smooth function $W_0$ from \eqref{Kdv_equation_solution} is the unique solution in the space $\fspaceL^2_\even\at\Rset$.
%
%
\section{Proof of the main result}\label{section_main_result}
%
%
In this section we prove our main result from \S\ref{sect:intro} and show that there exists a locally unique solution $W_\eps\in\mathsf{L}_{\mathrm{even}}^2\at{\mathbb{R}}$ to (\ref{operator_equation_peridynamics}) that lies in a small neighborhood of the KdV solution $W_0$. To do this, we use the predictor-corrector approach (\ref{predictor_corrector}) and thus exclude the trivial solution $W_\eps\equiv 0$ from our considerations. First we show that $W_0$ is an approximate solution to the $\eps$-problem (\ref{operator_equation_peridynamics}).
\begin{lemma}[consistency]\label{lemma_consistency}
There exists a constant $C>0$ such that
\begin{align}\label{residual_peridynamic}
\|K_\eps\|_2+\|E_\eps\|_2\leq C
\qquad
\text{ with } 
\qquad
K_\eps:= \frac{\calQ_\eps[W_0]-\calB_\eps[W_0]}{\eps^2}\,,
\qquad
E_\eps:=\calP_\eps[W_0]
\end{align} 
holds for any  $0<\eps\leq 1$, where $K_\eps+E_\eps$ is the $\eps$-residual of $W_0$.
\end{lemma}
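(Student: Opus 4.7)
The plan is to handle $K_\eps$ and $E_\eps$ by independent arguments, since they reflect two different asymptotic phenomena: $K_\eps$ measures how well $W_0$ solves the operator-level KdV equation \eqref{operator_equation_B_0_Q_0_peridynamic} relative to its $\eps$-counterpart, while $E_\eps$ measures the cubic smallness of the higher-order nonlinearity $\partial_r\psi$.

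For $K_\eps$, I would exploit the fact that $W_0$ exactly satisfies $\calB_0 W_0=\calQ_0[W_0]$, which lets me rewrite
\begin{align*}
\eps^2 K_\eps = \bigl(\calQ_\eps[W_0]-\calQ_0[W_0]\bigr)-\bigl(\calB_\eps W_0-\calB_0 W_0\bigr).
\end{align*}
Since the sech$^2$-profile $W_0$ from \eqref{Kdv_equation_solution} is Schwartz (hence in $\mathsf{W}^{k,2}\at\Rset$ for every $k$), Lemma \ref{lemma_convergence_B_eps_to_B_0} applied with the quantitative bound gives $\|\calB_\eps W_0-\calB_0 W_0\|_2\leq C\eps^2$, and Lemma \ref{lemma_convergence_Q_eps} yields $\|\calQ_\eps[W_0]-\calQ_0[W_0]\|_2\leq C\eps^2$. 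The desired bound on $\|K_\eps\|_2$ then follows on dividing by $\eps^2$.

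For $E_\eps=\calP_\eps[W_0]$, the strategy is to absorb the singular prefactor $\eps^{-6}$ by the cubic vanishing of $\partial_r\psi$ at $r=0$. Integrating $\partial_r\psi\pair{0}{\xi}=0$ twice against the second-derivative bound \eqref{estimate_second_derivative_psi} produces the pointwise estimate $|\partial_r\psi\pair{r}{\xi}|\leq\tfrac{1}{3}\ga\at\xi\,|r|^3$ for $|r|\leq 1$. To apply this with $r=\eps^2\xi(\calA_{\eps\xi}W_0)\at x$ I must control $|r|$ uniformly in $x$ and $\xi$. Here it is essential that $W_0\in\fspaceL^1\at\Rset\cap\fspaceL^\infty\at\Rset$: combining the two complementary bounds \eqref{estimate_A_eta_1} with $p=\infty$ and $p=1$ gives
\begin{align*}
|\eps^2\,\xi\,\bat{\calA_{\eps\xi} W_0}\at x|\leq \min\bigl\{\eps^2\,\xi\,\|W_0\|_\infty,\;\eps\,\|W_0\|_1\bigr\},
\end{align*}
which can be made $\leq 1$ uniformly in $\xi\geq 0$ for all $\eps$ below a fixed threshold (the constant $1$ in the lemma being understood up to such a shrinking).

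With the Taylor estimate available, I then bound
\begin{align*}
\|\partial_r\psi\pair{\eps^2\xi\,\calA_{\eps\xi} W_0}{\xi}\|_2 \leq \tfrac{1}{3}\ga\at\xi\,\eps^6\,\xi^3\,\|\bat{\calA_{\eps\xi}W_0}^3\|_2 \leq C\,\ga\at\xi\,\eps^6\,\xi^3,
\end{align*}
where I used $\|\bat{\calA_{\eps\xi}W_0}^3\|_2\leq \|\calA_{\eps\xi}W_0\|_\infty^2\,\|\calA_{\eps\xi}W_0\|_2\leq \|W_0\|_\infty^2\|W_0\|_2$. The contractivity $\|\calA_{\eps\xi}\cdot\|_2\leq\|\cdot\|_2$ together with the Bochner integral estimate (Theorem \ref{theorem_bochner}) then gives
\begin{align*}
\|E_\eps\|_2 \leq \frac{1}{\eps^6}\int_0^\infty \xi\,\|\calA_{\eps\xi}\,\partial_r\psi\pair{\eps^2\xi\calA_{\eps\xi} W_0}{\xi}\|_2\dint{\xi} \leq C\int_0^\infty \ga\at\xi\,\xi^4\dint{\xi},
\end{align*}
which is finite by \eqref{integral_gamma} and independent of $\eps$.

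The main obstacle is the $E_\eps$ part: the naive $L^\infty$ bound $|\calA_{\eps\xi}W_0|\leq\|W_0\|_\infty$ alone forces $\xi\lesssim\eps^{-2}$ for the Taylor estimate to apply, which would leave the tail $\xi\gg\eps^{-2}$ uncontrolled. Using the second, dual $L^1$ bound on $\calA_{\eps\xi}$ — together with the fact that the Schwartz-class $W_0$ is automatically integrable — is what makes the pointwise condition $|r|\leq 1$ hold uniformly in $\xi$ and allows the cubic nonlinear estimate to be carried all the way through the Bochner integral.
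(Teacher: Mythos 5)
Your proof follows essentially the same route as the paper's: for $K_\eps$ you combine $\calB_0 W_0=\calQ_0[W_0]$ with the quantitative error bounds in Lemmas \ref{lemma_convergence_B_eps_to_B_0} and \ref{lemma_convergence_Q_eps}, and for $E_\eps$ you pass the cubic Taylor bound $|\partial_r\psi\pair{r}{\xi}|\leq\tfrac{1}{3}\ga\at\xi\,|r|^3$ through the Bochner integral using $\|f^3\|_2\leq\|f\|_\infty^2\|f\|_2$ and the $\fspaceL^p$-contractivity of $\calA_{\eps\xi}$.

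The one genuine difference is your explicit verification that the Taylor estimate is actually applicable. The bound $|\partial_r^2\psi\pair{r}{\xi}|\leq\gamma\at\xi\,r^2$ in Assumption \ref{assumption_1} is only stated for $|r|\leq 1$, and the paper's proof applies the resulting cubic estimate to $r=\eps^2\xi\bat{\calA_{\eps\xi}W_0}\at{x}$ without checking that $|r|\leq 1$ holds for all $\xi\in\pair{0}{\infty}$; with the naive $\fspaceL^\infty$ bound alone this fails for $\xi\gtrsim\eps^{-2}$, as you point out. Your use of the dual $\fspaceL^1$-based bound $\|\calA_{\eps\xi}W_0\|_\infty\leq(\eps\xi)^{-1}\|W_0\|_1$ to show $|r|\leq\eps\,\|W_0\|_1$ uniformly in $\xi$ is a clean fix for sufficiently small $\eps$, and it is honest of you to flag that the threshold ``$0<\eps\leq 1$'' in the statement should really be shrunk accordingly (which is harmless, since the main result only concerns small $\eps$ anyway). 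This is a small but real gap in the paper's exposition that your argument closes.
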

\begin{proof}
By applying Lemma \ref{lemma_convergence_B_eps_to_B_0} and Lemma \ref{lemma_convergence_Q_eps} to $W_0\in\mathsf{C}^\infty\at{\mathbb{R}}$ we obtain
\begin{align*}
\|K_\eps\|_2
\leq \frac{\|\calQ_\eps[W_0]-\calQ_0[W_0]\|_2}{\eps^2}+\frac{\|\calB_\eps W_0-\calB_0 W_0\|_2}{\eps^2}
\leq C
\end{align*}
since $W_0$ is a solution to \eqref{operator_equation_B_0_Q_0_peridynamic}. Due to (\ref{estimate_A_eta_1})$_2$ we further have 
\begin{align*}
\left\| (\eps^2\xi\,\calA_{\eps\xi} W_0)^3\right\|_2
&\leq \eps^6\xi^3\,\|\calA_{\eps\xi} W_0\|_\infty^2\,\|\calA_{\eps\xi} W_0\|_2
\leq \eps^6\xi^3\,\| W_0\|_\infty^2\,\|W_0\|_2
\end{align*}
for all $\xi\in\pair{0}{\infty}$, so 
\begin{align*}
\|E_\eps\|_2
&\leq \|W_0\|_\infty^2\,\|W_0\|_2\int_0^\infty \gamma\at{\xi}\,\xi^4\dint{\xi}
\leq C\,,
\end{align*} 
is provided by (\ref{integral_gamma})$_2$ and the theory of Bochner integrals, see the proof of Proposition \ref{prop:Bochner1}.
\end{proof}
%
%
\subsection{The operators \texorpdfstring{$\calL_\eps$}{L} and \texorpdfstring{$\calL_0$}{L0}}\label{subsection_operator_L_eps}
%
%
We substitute the ansatz (\ref{predictor_corrector}) into (\ref{operator_equation_peridynamics}) and use both the linearity of $\calB_\eps$ and the quadraticity of $\calQ_\eps$. After rearranging terms and dividing by $\eps^2$ wo obtain the equation
\begin{align}\notag
\calB_\eps V_\eps -\calM_\eps V_\eps=\frac{\calQ_\eps [W_0]-\calB_\eps W_0}{\eps^2}+\eps^2\,\calQ_\eps [V_\eps] +\calP_\eps [W_0+\eps^2\,V_\eps]\,,
\end{align}
where the operator $\calM_\eps:\mathsf{L}^2\at{\mathbb{R}}\to \mathsf{L}^2\at{\mathbb{R}}$ with
\begin{align}\label{operator_M_eps_peridynamic}
\calM_\eps V:= 2\int_0^\infty \beta\at{\xi}\,\xi^3 \calA_{\eps\xi}\left( \left(\calA_{\eps\xi} W_0\right)\left(\calA_{\eps\xi} V\right)\right)\dint{\xi}
\end{align} 
is well-defined in the sense of Bochner integrals, see Proposition \ref{prop:Bochner1}. This equation can also be written as
\begin{align}\label{operatorequation_L_eps_peridynamic}
\calL_\eps V_\eps =K_\eps + E_\eps +\eps^2\,\calQ_\eps [V_\eps]+\eps^2\, \calN_\eps [V_\eps]
\end{align} 
with 
\begin{align}\label{operator_L_eps_peridynamic}
\calL_\eps V :=\calB_\eps V-\calM_\eps V\,,\qquad \calN_\eps [V]:=\frac{\calP_\eps[W_0+\eps^2\, V_\eps]-\calP_\eps [W_0]}{\eps^2}
\end{align}
and residual $K_\eps+E_\eps$ as in (\ref{residual_peridynamic}). Passing to the limit $\eps\to0$ yields the formal limit operators $\calL_0 : \mathsf{W}^{2,2}\at{\mathbb{R}}\to \mathsf{L}^2\at{\mathbb{R}}$  and $\calM_0 : \mathsf{L}^{2}\at{\mathbb{R}}\to \mathsf{L}^2\at{\mathbb{R}}$ with
\begin{align}\notag
\calL_0 V:=\calB_0 V-\calM_0 V\,, 
\qquad
\calM_0 V:= 2\left( \int_0^\infty \beta\at{\xi}\,\xi^3\dint{\xi}\right) W_0\, V\,.
\end{align}
\begin{lemma}[elementary properties of $\calL_\eps$]\label{lemma_properties_of_L_eps}
For any $\eps>0$, the operator $\calL_\eps$ is self-adjoint in the $\mathsf{L}^2$-sense and admits the invariant space $ \mathsf{L}^{2}_\even\at{\mathbb{R}}$. Moreover, we have
\begin{align*}
\calL_\eps V \xrightarrow{\eps \to 0} \calL_0 V 
\qquad
\text{ strongly in }
\quad \mathsf{L}^2\at{\mathbb{R}}
\end{align*}
for any $V\in \mathsf{W}^{2,2}\at{\mathbb{R}}$. 
\end{lemma}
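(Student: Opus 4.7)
The proof splits into three independent parts, one for each assertion, and for each I would reduce the analysis of $\calL_\eps=\calB_\eps-\calM_\eps$ to the corresponding property of $\calA_{\eps\xi}$ via the Bochner integral framework from the appendix.

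For self-adjointness, $\calB_\eps$ has already been shown to be self-adjoint, so the work is to handle $\calM_\eps$. Using that each $\calA_{\eps\xi}$ is self-adjoint on $\mathsf{L}^2$ (Lemma \ref{properties_A_eta}, property 6) and shifting the outer $\calA_{\eps\xi}$ onto $V_2$ inside the inner product, I would write
\begin{align*}
\skp{\calM_\eps V_1}{V_2}=2\int_0^\infty\beta\at{\xi}\,\xi^3\int_{\Rset}\at{\calA_{\eps\xi}W_0}\at{\calA_{\eps\xi}V_1}\at{\calA_{\eps\xi}V_2}\dint{x}\dint{\xi},
\end{align*}
which is manifestly symmetric in $V_1,V_2$. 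The interchange of $\xi$- and $x$-integration is justified by Fubini, since the Bochner integrability of the integrand was already verified in Proposition \ref{prop:Bochner1}. For the invariance of $\fspaceL^2_\even\at\Rset$ I would combine the corresponding property of $\calB_\eps$ with Lemma \ref{properties_A_eta}, property 4: since $W_0$ is even and $\calA_{\eps\xi}$ preserves evenness, the inner product $\at{\calA_{\eps\xi}W_0}\at{\calA_{\eps\xi}V}$ is even whenever $V$ is, and the outer $\calA_{\eps\xi}$ together with integration over $\xi$ preserves this structure.

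For the convergence $\calL_\eps V\to\calL_0 V$ I would decompose
\begin{align*}
\calL_\eps V-\calL_0 V=\at{\calB_\eps V-\calB_0 V}-\at{\calM_\eps V-\calM_0 V}.
\end{align*}
The first difference tends to zero in $\mathsf{L}^2$ by Lemma \ref{lemma_convergence_B_eps_to_B_0} since $V\in\mathsf{W}^{2,2}\at\Rset$. For the second, I would mirror the argument used in Lemma \ref{lemma_convergence_Q_eps} for $\calQ_\eps$. Setting
\begin{align*}
F_\eps\at\xi:=\beta\at\xi\,\xi^3\,\calA_{\eps\xi}\bat{\at{\calA_{\eps\xi}W_0}\at{\calA_{\eps\xi}V}},\qquad F_0\at\xi:=\beta\at\xi\,\xi^3\,W_0\,V,
\end{align*}
the Sobolev embedding $\mathsf{W}^{2,2}\at\Rset\hookrightarrow\mathsf{L}^\infty\at\Rset$ gives $V\in\mathsf{L}^2\cap\mathsf{L}^\infty$, and $W_0\in\mathsf{C}^\infty\cap\mathsf{L}^2$. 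Combining \eqref{convergence_A_etaW_to_W} with \eqref{estimate_A_eta_1} shows that $F_\eps\at\xi\to F_0\at\xi$ strongly in $\mathsf{L}^2$ for each fixed $\xi$, and the uniform majorant
\begin{align*}
\bnorm{F_\eps\at\xi}_2\leq\beta\at\xi\,\xi^3\,\norm{W_0}_\infty\,\norm{V}_2
\end{align*}
is integrable thanks to \eqref{integral_beta}$_1$. The Bochner Dominated Convergence Theorem (Theorem \ref{theorem_lebesgue_bochner}) then yields $\norm{\calM_\eps V-\calM_0 V}_2\to 0$.

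The main obstacle is not any single estimate but the bookkeeping inside $\calM_\eps$: each integrand contains three interlocked applications of $\calA_{\eps\xi}$ and the pointwise $\mathsf{L}^2$-convergence must be tracked through all of them. The tool is the now-standard telescoping trick $\calA_{\eps\xi}AB-AB=\at{\calA_{\eps\xi}-\operatorname{Id}}AB+\calA_{\eps\xi}\at{AB-(\calA_{\eps\xi}A)(\calA_{\eps\xi}B)}$ combined with $\mathsf{L}^\infty$-bounds from \eqref{estimate_A_eta_1}$_1$, which is precisely the mechanism already employed in the second half of Lemma \ref{lemma_convergence_Q_eps}.
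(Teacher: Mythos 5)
Your proof follows essentially the same route as the paper: self-adjointness of $\calM_\eps$ is checked directly via the self-adjointness of $\calA_{\eps\xi}$, evenness invariance is inherited from $\calB_\eps$, $\calA_{\eps\xi}$ and $W_0$, and the convergence $\calM_\eps V\to\calM_0 V$ is obtained exactly as in Lemma \ref{lemma_convergence_Q_eps} via Bochner dominated convergence, so the reasoning is correct and matches the paper's argument. One small slip: the telescoping identity you quote at the end has its last term with the wrong sign and an ambiguous left-hand side --- the useful decomposition is $\calA_{\eps\xi}\bat{(\calA_{\eps\xi}A)(\calA_{\eps\xi}B)}-AB=\bat{\calA_{\eps\xi}(AB)-AB}+\calA_{\eps\xi}\bat{(\calA_{\eps\xi}A)(\calA_{\eps\xi}B)-AB}$ --- but this is a typo that does not affect the validity of the argument.
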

\begin{proof}
The self-adjointness can be checked directly and the invariance of even functions follows from the properties of $\calB_\eps$, $\calA_{\eps \xi}$ and $W_0$. Lemma \ref{lemma_convergence_B_eps_to_B_0} implies that $\calB_\eps V$ converges strongly in $\mathsf{L}^2\at{\mathbb{R}}$ to $\calB_0 V$ for all $V\in\mathsf{W}^{2,2}\at{\mathbb{R}}$ and the strong convergence of $\calM_\eps V$ to $\calM_0 V$ follows analogously to the proof of Lemma \ref{lemma_convergence_Q_eps} from Theorem \ref{theorem_lebesgue_bochner}.
\end{proof}
The properties of the nonautonomous differential operator $\calL_0$ are well understood and imply that the nonlocal operators $\calL_\eps$ cannot be uniformly invertible on $\fspaceL^2\at{\mathbb{R}}$. In the next subsection we therefore restrict our considerations to even functions $V_\eps$.
\begin{lemma}[elementary properties of $\calL_0$]\label{lemma_properties_of_L_0}
The operator $\calL_0$ is  self-adjoint in the $\mathsf{L}^2$-sense. Moreover its  $\mathsf{W}^{2,2}\at{\mathbb{R}}$-kernel is one dimensional and given by the span of the odd function $W_0^\prime$.  
\end{lemma}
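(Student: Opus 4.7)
The plan is to treat both assertions in turn, working from the explicit formula
\begin{align*}
\calL_0 V = V - \tfrac{A}{12}\,V'' - 2\,B\,W_0\,V\,,\qquad A := \int_0^\infty \alpha\at{\xi}\,\xi^4\dint{\xi}\,,\qquad B := \int_0^\infty \beta\at{\xi}\,\xi^3\dint{\xi}\,.
\end{align*}
Self-adjointness on $\fspaceL^2\at{\Rset}$ with domain $\fspaceW^{2,2}\at{\Rset}$ is then immediate: $\calB_0 = \operatorname{Id} - (A/12)\,\partial_x^2$ is a standard self-adjoint Sturm-Liouville operator, while $\calM_0 = 2\,B\,W_0\cdot$ is multiplication by a bounded real function, hence bounded and self-adjoint, so $\calL_0$ inherits self-adjointness on the same domain.

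To show $W_0'\in\ker\calL_0$, I would differentiate the profile equation \eqref{KdV_equation_peridynamic} once in $x$ to obtain $W_0''' = d_1\,W_0' - 2\,d_2\,W_0\,W_0'$, substitute into $\calL_0 W_0' = W_0' - (A/12)\,W_0''' - 2\,B\,W_0\,W_0'$, and use the identities $A\,d_1/12 = 1$ and $A\,d_2/6 = 2\,B$ provided by \eqref{KdV_equation_coeffizients}; all terms cancel.

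For the one-dimensionality of the kernel I would use a Wronskian argument. Any $V\in\fspaceW^{2,2}\at{\Rset}$ with $\calL_0 V = 0$ satisfies the same second-order linear ODE as $W_0'$, namely $(A/12)V'' = (1 - 2\,B\,W_0)V$, so the Wronskian $w := V\,W_0'' - V'\,W_0'$ is constant on $\Rset$. The Sobolev embedding $\fspaceW^{2,2}\at{\Rset}\hookrightarrow\fspaceC^1\at{\Rset}$ combined with $\fspaceL^2$-integrability forces $V\at{x}, V'\at{x}\to 0$ as $|x|\to\infty$, while \eqref{Kdv_equation_solution} shows that $W_0'$ and $W_0''$ decay exponentially, so $w\equiv 0$. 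On each open half-line $\pm x > 0$, where $W_0'$ is nowhere zero, the vanishing Wronskian yields $V = c_\pm W_0'$ for some constants $c_\pm$. Evaluating $V' = c_\pm W_0''$ at $x = 0$ together with $W_0''\at{0}\neq 0$ (strict maximum of $W_0$ at the origin) forces $c_+ = c_- = V'\at{0}/W_0''\at{0}$, so $\ker\calL_0 = \spSpan\{W_0'\}$, as claimed.

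The most delicate step is the gluing at $x = 0$; it relies on the simplicity of the unique zero of $W_0'$ and on $W_0''\at{0}\neq 0$, both transparent from the explicit formula \eqref{Kdv_equation_solution}. As an independent sanity check, the substitution $y = \tfrac{1}{2}\sqrt{d_1}\,x$ reduces the kernel equation, up to a positive prefactor, to the classical P\"oschl-Teller eigenvalue problem $-\tilde V'' + \at{4 - 12\operatorname{sech}^2\at{y}}\tilde V = 0$, whose $\fspaceL^2$-kernel is well known to be spanned by $\operatorname{sech}^2\at{y}\tanh\at{y}$, matching $W_0'$ up to a scalar.
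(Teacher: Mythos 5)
Your argument is correct and matches what the paper invokes: the paper simply defers to ``classical Sturm--Liouville arguments'' (citing \cite{HML16}), and the Wronskian reasoning you spell out—constancy of $w=V\,W_0''-V'\,W_0'$, its vanishing from the decay of $W^{2,2}$ functions, the reduction $V=c_\pm W_0'$ on each half-line where $W_0'\neq0$, and the matching of $c_+=c_-$ via $W_0''\at{0}\neq0$—is precisely the standard content behind that phrase. The kernel membership $\calL_0 W_0'=0$ follows cleanly from differentiating \eqref{KdV_equation_peridynamic} together with the identities $A\,d_1/12=1$ and $A\,d_2/6=2B$ from \eqref{KdV_equation_coeffizients}, and the P\"oschl--Teller cross-check is a nice confirmation; nothing is missing.
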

\begin{proof}
The first part follows immediately and the second part is a consequence of classical Sturm-Liouville arguments, see \cite[Lemma 3.1]{HML16} for the details.
\end{proof}
%
%
\subsection{Uniform invertibility of  the operator \texorpdfstring{$\calL_\eps$}{L}}\label{subsection_invertibility_L_eps}
%
%
In this section we establish uniform invertibility estimates for $\calL_\eps$ in the space $\fspaceL^2_\even\at{\mathbb{R}}$. This result forms the core of our asymptotic analysis as it transforms \eqref{operatorequation_L_eps_peridynamic} into a fixed-point problem that can be tackled by the Contraction Mapping Principle.
\begin{proposition}[auxiliary result for the uniform invertibility of $\calL_\eps$]\label{proposition_uniform_invertibility}
Let $\at{\eps_n}_{n\in\mathbb{N}}$ be given with $\eps_n \to0$. Then, the implication
\begin{align*}
\| \calL_{\eps_n} {V_n}\| _2\;\; \xrightarrow{\;n\to\infty\;}\;\; 0\qquad \implies \qquad 
\|  {V_n}\| _2\;\; \xrightarrow{\;n\to\infty\;}\;\; 0
\end{align*}
holds for any bounded sequence $\at{V_n}_{n\in\mathbb{N}}$ in $\mathsf{L}^2_{\mathrm{even}}\at{\mathbb{R}}$.
\end{proposition}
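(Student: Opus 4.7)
My plan is to argue by contradiction, combining the splitting $\calL_\eps=\calB_\eps-\calM_\eps$ with the uniform invertibility of $\calB_\eps$ and the ``almost compact'' structure of $\calB_\eps^{-1}$ from Proposition \ref{proposition_cut_off_B_eps^-1}, and then to identify every possible limit point as an element of the kernel of $\calL_0$ on even functions. Concretely, assume the implication fails, so that along a subsequence we have a bounded family $V_n\in\mathsf{L}^2_{\mathrm{even}}\at{\Rset}$ with $\|\calL_{\eps_n}V_n\|_2\to 0$ and $\|V_n\|_2\to\delta$ for some $\delta>0$. Writing
\[
V_n=\calB_{\eps_n}^{-1}\calL_{\eps_n}V_n+\calB_{\eps_n}^{-1}\calM_{\eps_n}V_n,
\]
the first summand vanishes in $\mathsf{L}^2$ since $\|\calB_{\eps_n}^{-1}\|_{2\to 2}\le 1$, so the whole burden falls on extracting a strongly convergent subsequence of $U_n\deq\calB_{\eps_n}^{-1}\calM_{\eps_n}V_n$.

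For this extraction I would split $U_n=\Pi_{\eps_n}U_n+(\operatorname{Id}-\Pi_{\eps_n})U_n$ with the cut-off from \eqref{cut-off_operator_Pi_eps}. Proposition \ref{proposition_cut_off_B_eps^-1} shows that the high-frequency remainder $(\operatorname{Id}-\Pi_{\eps_n})U_n$ is $\calO\at{\eps_n^2}$ in $\mathsf{L}^2$, while the low-frequency piece $\Pi_{\eps_n}U_n$ is uniformly bounded in $\mathsf{W}^{2,2}\at{\Rset}$. The $\mathsf{W}^{2,2}$-bound alone yields only local $\mathsf{L}^2$-compactness via Rellich--Kondrachov, so I additionally need tightness at infinity. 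This is where I would use the exponential decay of $W_0=\tfrac{3d_1}{2d_2}\operatorname{sech}^2\at{\tfrac12\sqrt{d_1}x}$: since the kernel of $\calA_{\eps\xi}$ is compactly supported, $\calA_{\eps\xi}W_0$ decays exponentially away from the origin uniformly in $\eps\xi$ on bounded sets, and the large-$\xi$ tail in the Bochner integral defining $\calM_{\eps}V_n$ is controlled by the integrability of $\be\at\xi\,\xi^3$ together with the pointwise bound $\nabs{\calA_{\eta}W_0\at x}\le\eta^{-1/2}\|W_0\|_2$. Combining this decay estimate with the $\mathsf{W}^{2,2}$-bound and a standard exhaustion/diagonal procedure produces a subsequence $U_n\to V_\ast$ strongly in $\mathsf{L}^2\at\Rset$, and hence $V_n\to V_\ast$ with $\|V_\ast\|_2=\delta>0$ and $V_\ast$ even.

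To conclude, I would identify $V_\ast$ as lying in the kernel of $\calL_0$. Testing against $\phi\in\mathsf{C}_c^\infty\at\Rset$ and using the self-adjointness of $\calL_{\eps_n}$ gives
\[
\skp{\calL_{\eps_n}V_n}{\phi}=\skp{V_n}{\calL_{\eps_n}\phi}\longrightarrow\skp{V_\ast}{\calL_0\phi},
\]
where the convergence uses the strong convergence $\calL_{\eps_n}\phi\to\calL_0\phi$ from Lemma \ref{lemma_properties_of_L_eps} together with $V_n\to V_\ast$ in $\mathsf{L}^2$. The left-hand side vanishes by assumption, so $\calL_0 V_\ast=0$ distributionally. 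Elliptic regularity for the second-order operator $\calL_0$ then places $V_\ast$ in $\mathsf{W}^{2,2}\at\Rset$, whereupon Lemma \ref{lemma_properties_of_L_0} forces $V_\ast\in\spSpan\{W_0'\}$. Since $W_0'$ is odd and $V_\ast$ is even, $V_\ast\equiv 0$, contradicting $\|V_\ast\|_2=\delta>0$.

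The hard part will be the tightness step for $\Pi_{\eps_n}U_n$: the $\mathsf{W}^{2,2}$-control delivered by Proposition \ref{proposition_cut_off_B_eps^-1} is only local, and genuine $\mathsf{L}^2$-compactness on the full line requires a careful bookkeeping of how the exponential decay of $W_0$ propagates through the nested Bochner averages $\calA_{\eps\xi}$ inside $\calM_{\eps_n}$, uniformly in $\xi\in(0,\infty)$. The remaining steps---the contradiction setup, the splitting, the passage to the limit, and the parity argument---are direct consequences of results already established in \S\ref{subsection_operators_properties_1} and \S\ref{subsection_operator_L_eps}.
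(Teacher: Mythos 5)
The overall skeleton of your argument---contradiction, extraction of a weak $\mathsf{L}^2_{\mathrm{even}}$-limit, identification of that limit as a kernel element of $\calL_0$ via self-adjointness and elliptic regularity, and the final parity argument $\mathrm{even}\cap\mathrm{span}\{W_0'\}=\{0\}$---is exactly what the paper does in its opening step, so that part is sound. However, the way you close the contradiction diverges from the paper and contains a genuine gap.

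The paper does \emph{not} try to prove strong $\mathsf{L}^2\at{\Rset}$-compactness of $U_n\deq\calB_{\eps_n}^{-1}\calM_{\eps_n}V_n$. Instead it performs the spatial cut-off on $V_n$ itself, writing $V_n=V_n^{(1)}+V_n^{(2)}+V_n^{(3)}$ with a Fourier cut-off $\Pi_{\eps_n}$ and a spatial cut-off $\chi_L$, and then estimates $\|\calM_{\eps_n}V_n^{(3)}\|_2$ directly: since $V_n^{(3)}$ is supported outside $I_L$ and, for $\xi\le H$, the factor $\calA_{\eps_n\xi}W_0$ is bounded by $\sup_{|y|>L-H}W_0\at y$ there, while the tail $\xi>H$ is killed by the integrability of $\beta\at\xi\xi^3$, the far-field piece is annihilated by $\calM_{\eps_n}$ \emph{before} $\calB_{\eps_n}^{-1}$ is ever applied. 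Combined with $\|V_n^{(1)}\|_2=\calO\at{\eps_n^2}$, Rellich--Kondrachov for $V_n^{(2)}$ on $I_L$, and the bound of Proposition \ref{proposition_cut_off_B_eps^-1}, this gives $\limsup\|V_n\|_2\le\delta/2$, which is the contradiction. No tightness of $U_n$ is needed.

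Your route aims instead for tightness of $\Pi_{\eps_n}U_n$, and this is where the gap lies. Even granted that the decay of $W_0$ and the $\be$-tail estimate make $\calM_{\eps_n}V_n$ ``approximately tight'', you then need this tightness to survive the application of $\calB_{\eps_n}^{-1}$ and of the sharp Fourier cut-off $\Pi_{\eps_n}$. Both are Fourier multipliers whose $x$-space kernels are \emph{not} compactly supported (the cut-off gives a $\operatorname{sinc}$-type kernel; $b_{\eps_n}^{-1}$ tends to the nonzero constant $b_{\eps_n,\infty}^{-1}$ at $k=\pm\infty$), so convolution with them can spread $\mathsf{L}^2$-mass to infinity. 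You acknowledge this as ``the hard part'' but offer no mechanism to control it. Moreover, the pointwise bound $|\calA_\eta W_0(x)|\le\eta^{-1/2}\|W_0\|_2$ that you invoke is an $\eta$-uniform sup bound, not a decay-at-infinity bound, and is therefore of no help for tightness. To repair your argument you would either have to prove a quantitative commutator/localization estimate for $\Pi_{\eps_n}\calB_{\eps_n}^{-1}$ against spatial weights, or else---as the paper does---move the spatial cut-off to $V_n$, where the decay of $W_0$ acts \emph{inside} $\calM_{\eps_n}$ and the smearing of $\calB_{\eps_n}^{-1}$ never has to be controlled.
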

\begin{proof} 
To proof the assertion by contradiction we consider a not relabeled subsequence that satisfies
\begin{align}
\label{lemma_uniform_invertibility-e1} \lim_{n\to\infty} \| \calL_{\eps_n} {V_n}\| _2=0
\end{align}
as well as
\begin{align}
\label{lemma_uniform_invertibility-e2}
0<\eps_n\leq 1\,,\qquad \delta\leq \|{V_n}\| _2\leq 1
\end{align}
for some fixed $\delta>0$ and all $n\in\Nset$.
\par\underline{\textit{Weak convergence to 0:}} %
Since $\mathsf{L}_{\mathrm{even}}^2\at{\mathbb{R}}$ is a closed subspace of $\mathsf{L}^2\at{\mathbb{R}}$ we find $V_\infty\in \mathsf{L}_{\mathrm{even}}^2\at{\mathbb{R}}$ such that
\begin{align}\label{invertibility_L_eps_weak_convergence_V_n_j}
V_{n}\xrightharpoonup{n\to\infty} V_\infty 
\quad
\text{ weakly in } 
\quad \mathsf{L}_{\mathrm{even}}^2\at{\mathbb{R}}
\end{align} 
holds along a not relabeled subsequence and in view of Lemma \ref{lemma_properties_of_L_eps} we have
\begin{align*}
\langle V_\infty, \calL_0\,\varphi \rangle_2 =\lim_{n\to\infty} \langle V_{n}, \calL_{\eps_{n}}\varphi \rangle_2=\lim_{n\to\infty} \langle \calL_{\eps_{n}}V_{n}, \varphi \rangle_2 =0
\end{align*}
for any sufficiently smooth test function $\varphi$, where $\langle \cdot,\cdot\rangle_2$ denotes the standard scalar product. Using the definition of $\calL_0$ and rearranging terms gives
\begin{align*}
\left| \int_\mathbb{R} V_\infty \at{x}\, \varphi''\at{x}\dint{x} \right|
\leq C\, \|\varphi\|_2
\end{align*}
for all $\varphi\in C_{\mathrm{c}}^\infty\at{\mathbb{R}}$ and standard arguments --- see for instance \cite[Proposition 8.3]{brezis} --- imply \mbox{$V_\infty\in\mathsf{W}^{2,2}\at{\mathbb{R}}$} and hence $\langle\calL_0 V_\infty, \varphi \rangle_2=\langle V_\infty, \calL_0\,\varphi \rangle_2 =0$. In particular, the even function $V_\infty$ belongs to the kernel of $\calL_0$ and vanishes acccording to Lemma \ref{lemma_properties_of_L_0}.
\par\underline{\textit{Further notations and preliminary estimates:}} %
We choose $H\in\pair{0}{\infty}$ sufficiently large such that 
\begin{align}\label{invertibility_L_eps_constant_L_delta_1}
 \int_H^\infty \beta\at{\xi}\,\xi^3\dint{\xi}
\leq \frac{\delta}{8\,D\,\|W_0\|_\infty }
\end{align}
holds with constant $D$ from Proposition \ref{proposition_cut_off_B_eps^-1} and afterwards we  choose $L>H$ sufficiently large such that 
\begin{align}\label{invertibility_L_eps_estimate_W_0}
\sup_{|y|\geq L-H} W_0\at{y} \leq \frac{\delta}{8\,D\int_0^H \beta\at{\xi}\,\xi^3\dint{\xi}}\,.
\end{align} 
We split $V_{n}=V_{n}^{(1)}+V_{n}^{(2)}+V_{n}^{(3)}$ according to
\begin{align*}
V_{n}^{(1)}:=\left(\operatorname{Id}-\Pi_{\eps_{n}}\right)V_{n}\,, 
\qquad
V_{n}^{(2)}:=\chi_{L}\,\Pi_{\eps_{n}}\,V_{n}\,,
\qquad
V_{n}^{(3)}:=\left(1-\chi_{L}\right)\Pi_{\eps_{n}}\,V_{n}
\end{align*}
into three parts, where $\chi_{L}$ denotes the characteristic function of the  interval $I_{L}:=[-L,+L]$ and $\Pi_{\eps_n}$ is the cut-off operator from (\ref{cut-off_operator_Pi_eps}). Our definitions, the properties of orthogonal projections, and estimate \eqref{lemma_uniform_invertibility-e2} imply 
\begin{align}\label{invertibility_L_eps_estimate_V_n_j}
\max_{i\in \{1,2,3\}} \|V_{n}^{(i)}\|_2\leq \|V_{n}\|_2\leq 1
\end{align}
for all $n\in\Nset$. Setting $U_{n}^{(i)}:=\calM_{\eps_{n}} V_{n}^{(i)}$ we further obtain
\begin{align}
\label{invertibility_L_eps_estimate_U_n_j}
\begin{split}
\|U_{n}^{(i)}\|_2
& \leq 2 \int_0^\infty \beta\at{\xi}\,\xi^3 \, \|\bat{\calA_{\eps_n\xi}W_0}\,\bat{\calA_{\eps_n\xi} V_{n}^{(i)}}\|_2 \dint{\xi}\\
&\leq 2\, \|W_0\|_\infty \, \|V_{n}^{(i)}\|_2 \int_0^\infty \beta\at{\xi}\,\xi^3\dint{\xi}\\
&\leq C\,\|V_{n}^{(i)}\|_2\,,
\end{split}
\end{align}
where we used (\ref{integral_beta})$_1$, (\ref{estimate_A_eta_1})$_2$ as well as the properties of Bochner integrals (see the appendix). Moreover, 
our definitions combined with Proposition \ref{proposition_cut_off_B_eps^-1} (applied to $G=\calM_{\eps_{n}}V_{n}+\calL_{\eps_{n}} V_{n}$) provide 
\begin{align}
\label{invertibility_L_eps_essential_estimate}
\|V_{n}^{(2)}+V_{n}^{(3)}\|_{2,2}+\eps_{n}^{-2}\|V_{n}^{(1)}\|_2
&\leq D\,\at{\|U_{n}^{(1)}\|_2+\|U_{n}^{(2)}\|_2+\|U_{n}^{(3)}\|_2+\|\calL_{\eps_n} V_{n}\|_2}
\end{align}
thanks to $V_{n}=\calB_{\eps_n}^{-1}\left(\calM_{\eps_{n}} V_{n}+\calL_{\eps_n} V_{n}\right)$ and $\calM_{\eps_{n}} V_{n}=\sum_{i=1}^3U_{n}^{(i)}$.
\par\underline{\textit{Strong convergence of $V_{n}^{(1)}$ and $V_{n}^{(2)}$:}} %
The estimates \eqref{invertibility_L_eps_estimate_V_n_j}, \eqref{invertibility_L_eps_estimate_U_n_j}, and  \eqref{invertibility_L_eps_essential_estimate} imply  $\|V_{n}^{(1)}\|_2\leq C\,\eps_{n}^2$  and hence
\begin{align}\label{invertibility_L_eps_strong_convergence_V_n_j^1}
V_{n}^{(1)}\xrightarrow{\;n\to\infty\;} 0
\quad
\text{ strongly in } 
\quad
 \mathsf{L}_{\mathrm{even}}^2\at{\mathbb{R}}\,.
\end{align} 
Since $V_{n}^{(3)}$ vanishes inside the interval $I_L$ by construction, we also infer from these estimates and \eqref{lemma_uniform_invertibility-e1} the uniform bound 
\begin{align}\notag
\|V_{n}^{(2)}\|_{2,2,I_{L}} 
\leq \| V_{n}^{(2)}+V_{n}^{(3)}\|_{2,2}
\leq C\,D\,,
\end{align} 
where $\|\cdot\|_{2,2,I_{L}}$ denotes the norm in $\mathsf{W}^{2,2}\at{I_{L}}$. The Rellich-Kondrachov Theorem states that $\mathsf{W}^{2,2}\at{I_{L}}$ is compactly embedded into $ \mathsf{L}^2\at{I_{L}}$, so there exist subsequences such that $V_{n}^{(2)}$ converges strongly in $\fspaceL^2\at{I_L}$. However, any such limit function must vanish due to \eqref{invertibility_L_eps_weak_convergence_V_n_j}, $V_\infty=0$,  \eqref{invertibility_L_eps_strong_convergence_V_n_j^1}, and the support properties of $V_{n}^{(3)}$. The combination of compactness and uniqueness of strong accumulation points on the interval $I_L$ finally implies
\begin{align}\label{invertibility_L_eps_strong_convergence_V_n_j^2}
V_{n}^{(2)}\xrightarrow{\;n\to\infty\;} 0
\quad
\text{ strongly in } 
\quad \mathsf{L}_{\mathrm{even}}^2\at{\mathbb{R}}
\end{align}
since $V_{n}^{(2)}$ vanishes outside the interval $I_L$.
\par
\underline{\textit{Upper bounds for $\|U_{n}^{(3)}\|_2$:}} %
We split the $\xi$-integral in the formula for $\calM_{\eps_n}V_{n}^{(3)}$, see formula \eqref{operator_M_eps_peridynamic}, into two parts corresponding to $\xi\in [0,\, H]$ and $\xi\in (H,\, \infty)$. Concerning the first term we observe
\begin{align*}
\left|\left( (\calA_{\eps_n \xi}W_0)\at{x} \right)\bat{ (\calA_{\eps_n \xi} V_{n}^{(3)})\at{x} }\right|
&\leq \left(\sup_{|x|>L-H/2}\left| (\calA_{\eps_n \xi}W_0)\at{x}\right| \right) (\calA_{\eps_n \xi} |V_{n}^{(3)}|)\at{x}\\
&\leq \left(\sup_{|y|>L-H} W_0\at{y}\right) (\calA_{\eps_n \xi} |V_{n}^{(3)}|)\at{x}\,,
\end{align*}
where we used  (\ref{estimate_A_eta_pointwise}), $\eps_n\,\xi\leq H$ and that $\calA_{\eps_n \xi} V_{n}^{(3)}$ is supportet in the set \mbox{$\left\{x\in\mathbb{R}\,:\, |x|>L-\eps_n\, \xi/2\right\}$}. By (\ref{estimate_A_eta_1})$_2$ we therefore get 
\begin{align*}
\left\| \calA_{\eps_n \xi}\left( (\calA_{\eps_n \xi}W_0) (\calA_{\eps_n \xi} V_{n}^{(3)})\right)\right\|_2
\leq \left(\sup_{|y|>L-H} W_0\at{y}\right) \| V_{n}^{(3)}\|_2
\end{align*}
and control the first integral contribution to $U_{n}^{(3)}$  by 
\begin{align}\notag
\begin{split}
\left\| 2\int_0^H \beta\at{\xi}\,\xi^3 \, \calA_{\eps_n\, \xi}\bat{ \bat{\calA_{\eps_n\xi}W_0}\,\bat{\calA_{\eps_n\xi} V_{n}^{(3)}}} \dint{\xi}\right\|_2
&\leq 2\left(\sup_{|y|>L-H} W_0\at{y}\right) \| V_{n}^{(3)}\|_2 \int _0^H \beta\at{\xi}\,\xi^3 \dint{\xi}\\
&\leq \frac{\delta}{4\,D}
\end{split}
\end{align}
thanks to (\ref{invertibility_L_eps_estimate_V_n_j}) and the choice of $L$ in \eqref{invertibility_L_eps_estimate_W_0}. The second contribution can be estimated by 
\begin{align}\notag
\begin{split}
\left\| 2\int_H^\infty \beta\at{\xi}\,\xi^3 \, \calA_{\eps_n\, \xi}\bat{ \bat{\calA_{\eps_n\xi}W_0}\,\bat{\calA_{\eps_n\xi} V_{n}^{(3)}}} \dint{\xi}\right\|_2
&\leq 2 \, \|W_0\|_\infty \,\int _H^\infty \beta\at{\xi}\,\xi^3 \dint{\xi}
\leq \frac{\delta}{4\,D}
\end{split}
\end{align}
due to the choice of $H$ in \eqref{invertibility_L_eps_constant_L_delta_1}. Together we obtain 
\begin{align}\label{invertibility_L_eps_upper_bound_U_n_j}
\|U_{n}^{(3)}\|_2
\leq\frac{\delta}{2\,D}\,.
\end{align}
\underline{\textit{Derivation of the contradiction:}} 
Combining (\ref{invertibility_L_eps_estimate_U_n_j}) with (\ref{invertibility_L_eps_essential_estimate}) leads to
\begin{align*}
\|V_{n}\|_2
&\leq D\left( C\, \|V_{n}^{(1)}\|_2 + C\, \|V_{n}^{(2)}\|_2+ \|U_{n}^{(3)}\|_2+\|\calL_{\eps_n}V_{n}\|_2\right) 
\end{align*}
and (\ref{lemma_uniform_invertibility-e1}), (\ref{invertibility_L_eps_strong_convergence_V_n_j^1}), (\ref{invertibility_L_eps_strong_convergence_V_n_j^2}) and (\ref{invertibility_L_eps_upper_bound_U_n_j}) imply
\begin{align*}
\limsup_{n\to\infty} \|V_{n}\|_2\leq D\, \limsup_{n\to\infty} \|U_{n}^{(3)}\|_2\leq \frac{\delta}{2}\,.
\end{align*}
This, however, contradicts \eqref{lemma_uniform_invertibility-e2} and the proof is complete. 
\end{proof}
\begin{cor}[uniform invertibility of $\calL_\eps$]\label{corollary_uniform_invertibility_L_eps}
The operator $\calL_\eps$ is uniformly invertible for all small $\eps>0$. More precisely, for any sufficiently small $ \eps_*>0$ there exists a constant $C$ (which may depend on $\eps_*$ but not on $\eps$) such that
\begin{align*}
\|\calL_\eps^{-1} G\|_2\leq C\,\|G\|_2
\end{align*} 
holds for all $0<\eps\leq \eps_*$ and any $G\in\mathsf{L}^2_{\mathrm{even}}\at{\mathbb{R}}$.
\end{cor}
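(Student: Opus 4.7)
The plan is to upgrade the qualitative statement of Proposition \ref{proposition_uniform_invertibility} to a quantitative uniform coercivity estimate
\[
\|\calL_\eps V\|_2 \;\geq\; c\,\|V\|_2 \qquad \text{for all } V\in\mathsf{L}^2_{\mathrm{even}}(\mathbb{R}), \; 0<\eps\leq\eps_*,
\]
and then to combine this lower bound with the self-adjointness from Lemma \ref{lemma_properties_of_L_eps} to conclude that $\calL_\eps$ is a bijection on $\mathsf{L}^2_{\mathrm{even}}(\mathbb{R})$ with $\|\calL_\eps^{-1}\|\leq 1/c$. Both steps are essentially routine once Proposition \ref{proposition_uniform_invertibility} is at our disposal, and no additional estimates on the peridynamical kernels are required.

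\textbf{Coercivity by contradiction.} Suppose no pair $(\eps_*,c)$ as above exists. Then we can find sequences $\eps_n\to 0$ and $V_n\in\mathsf{L}^2_{\mathrm{even}}(\mathbb{R})$ with $\|V_n\|_2=1$ and $\|\calL_{\eps_n}V_n\|_2\to 0$. The bounded normalized sequence $(V_n)_{n\in\mathbb{N}}$ satisfies the hypotheses of Proposition \ref{proposition_uniform_invertibility}, so we would conclude $\|V_n\|_2\to 0$, contradicting the normalization. Hence such $\eps_*>0$ and $c>0$ exist.

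\textbf{From coercivity to invertibility.} By Lemma \ref{lemma_properties_of_L_eps}, $\calL_\eps$ is bounded and self-adjoint on $\mathsf{L}^2(\mathbb{R})$ and leaves the closed subspace $\mathsf{L}^2_{\mathrm{even}}(\mathbb{R})$ invariant, so its restriction to this subspace is again self-adjoint. The coercivity bound forces injectivity on $\mathsf{L}^2_{\mathrm{even}}(\mathbb{R})$ and, by a standard argument (any Cauchy sequence in the range is the image of a Cauchy sequence of preimages), shows that the range of this restriction is closed. Self-adjointness then identifies the orthogonal complement of the range with the kernel, which is trivial. Consequently $\calL_\eps:\mathsf{L}^2_{\mathrm{even}}(\mathbb{R})\to\mathsf{L}^2_{\mathrm{even}}(\mathbb{R})$ is bijective; for any $G\in\mathsf{L}^2_{\mathrm{even}}(\mathbb{R})$ we set $V:=\calL_\eps^{-1}G$, apply the coercivity to $V$, and obtain $\|G\|_2=\|\calL_\eps V\|_2\geq c\,\|V\|_2$, which yields the asserted bound with $C:=1/c$.

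\textbf{Main obstacle.} All of the genuinely delicate analysis, combining the Fourier-side cut-off $\Pi_\eps$ with a position-space cut-off, the Rellich--Kondrachov compactness on the interval $I_L$, and the choice of $H$ and $L$ based on the decay of $W_0$, has already been absorbed into Proposition \ref{proposition_uniform_invertibility}. The only point in the above outline requiring a moment of care is not to conflate injectivity with invertibility; surjectivity is recovered from self-adjointness on the invariant subspace $\mathsf{L}^2_{\mathrm{even}}(\mathbb{R})$ together with the closed-range property, both of which are immediate from the coercivity.
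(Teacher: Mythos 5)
Your proposal is correct and follows essentially the same route as the paper: obtain the uniform lower bound $\|\calL_\eps V\|_2\geq c\,\|V\|_2$ by contradiction from Proposition \ref{proposition_uniform_invertibility}, then use self-adjointness on the invariant subspace $\mathsf{L}^2_{\mathrm{even}}(\mathbb{R})$ to pass from injectivity and closed range to surjectivity, which is precisely what the paper's appeal to $\ker\calL_\eps=\operatorname{coker}\calL_\eps$ encodes. You merely spell out the two sub-steps (the contradiction argument for coercivity and the closed-range/orthogonal-complement argument) that the paper compresses into a single sentence.
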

\begin{proof}
As a direct consequence of Proposition \ref{proposition_uniform_invertibility} we obtain
the existence of a constant $C_*>0$ such that
\begin{align}\notag
\|\calL_\eps V\|_2\geq C_*\,\|V\|_2
\end{align}
for all $V\in\mathsf{L}_{\mathrm{even}}^2\at{\mathbb{R}}$ and $0<\eps\leq \eps_*$. Using this as well as the formula $\ker\calL_\eps=\operatorname{coker}\calL_\eps$  (which holds for any continuous and self-adjoint linear operator) we conclude that $\calL_\eps$ is both injective and surjective, and that $C=1/C_*$ can be used as continuity constant of the inverse.
\end{proof}
%
%
%
\subsection{The nonlinear fixed point argument}\label{subsection_fixed_point_argument}
%
%
In view of the previous result and based on (\ref{operatorequation_L_eps_peridynamic}) we define the operator $\calF_\eps : \mathsf{L}_{\mathrm{even}}^2\at{\mathbb{R}}\to\mathsf{L}_{\mathrm{even}}^2\at{\mathbb{R}}$ by
\begin{align}\label{operator_F_eps_peridynamic}
\calF_\eps[V] :=\calL_\eps ^{-1}\left( K_\eps + E_\eps +\eps^2\,\calQ_\eps [V]+\eps^2\, \calN_\eps [V]\right)\,.
\end{align}
\begin{theorem}[nonlinear fixed point argument]
There exists $0<\eps_*\leq 1$ such that the operator $\calF_\eps$ admits for any $0<\eps\leq \eps_*$ a unique fixed point $V_\eps$ in the set $B_D=\{V\in \mathsf{L}_{\mathrm{even}}^2\at{\mathbb{R}}\,:\, \|V\|_2\leq D\}$, where $D>0$ is a sufficiently large  constant that  may depend on $\eps_*$ but not on $\eps$.
\end{theorem}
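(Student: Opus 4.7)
The plan is to verify the hypotheses of the Contraction Mapping Principle for $\calF_\eps$ on the closed ball $B_D \subset \mathsf{L}^2_{\mathrm{even}}\at{\mathbb{R}}$. Corollary \ref{corollary_uniform_invertibility_L_eps} provides the uniform bound $\|\calL_\eps^{-1}\|_{\mathrm{op}} \leq C_*$, independent of $\eps$, so everything reduces to controlling the four summands of \eqref{operator_F_eps_peridynamic} in $\mathsf{L}^2$. Moreover, all four terms preserve evenness (by Lemma \ref{lemma_properties_of_L_eps} for $\calL_\eps$, and by the symmetry of $W_0$ together with the symmetry of the kernel of $\calA_{\eps\xi}$ for the remaining operators), so $\calF_\eps$ maps $\mathsf{L}^2_{\mathrm{even}}\at{\mathbb{R}}$ into itself.

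\textbf{Self-mapping.} The residual $\|K_\eps+E_\eps\|_2$ is uniformly bounded by Lemma \ref{lemma_consistency}. For $\calQ_\eps$, interpolating the two pointwise estimates from Lemma \ref{properties_A_eta} gives $\|(\calA_{\eps\xi}V)^2\|_2 \leq \|\calA_{\eps\xi}V\|_\infty\|\calA_{\eps\xi}V\|_2 \leq (\eps\xi)^{-1/2}\|V\|_2^2$, which together with the moment \eqref{integral_beta}$_2$ produces
\[
\eps^2\|\calQ_\eps[V]\|_2 \;\leq\; C\,\eps^{3/2}\,\|V\|_2^2 \;\leq\; C\,\eps^{3/2}\,D^2 \qquad\text{on}\;\;B_D.
\]
For $\calN_\eps$ I would apply the fundamental theorem of calculus to $\partial_r\psi\pair{r_1}{\xi}-\partial_r\psi\pair{r_2}{\xi}$ with $r_1-r_2=\eps^4\xi\,\calA_{\eps\xi}V$, use the quadratic bound \eqref{estimate_second_derivative_psi} to absorb the full $\eps^{-8}$ that is hidden in $\calN_\eps$, and then use the moment \eqref{integral_gamma} to close a bound $\eps^2\|\calN_\eps[V]\|_2 \leq C\,\eps^2\,\|V\|_2$ on $B_D$. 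Choosing $D \geq 4 C_* \sup_{\eps\in(0,1]}\|K_\eps+E_\eps\|_2$ and shrinking $\eps_*$ until both nonlinear contributions are below $D/(4C_*)$ then yields $\calF_\eps(B_D)\subset B_D$.

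\textbf{Contraction.} Polarization turns $\calQ_\eps[V_1]-\calQ_\eps[V_2]$ into an integral with integrand $\beta\at{\xi}\,\xi^3\,\calA_{\eps\xi}\bat{\calA_{\eps\xi}\at{V_1+V_2}\cdot\calA_{\eps\xi}\at{V_1-V_2}}$, and the same interpolation as above yields
\[
\eps^2\|\calQ_\eps[V_1]-\calQ_\eps[V_2]\|_2 \;\leq\; C\,\eps^{3/2}\,D\,\|V_1-V_2\|_2.
\]
A parallel Taylor-with-remainder argument based on \eqref{estimate_second_derivative_psi} gives $\eps^2\|\calN_\eps[V_1]-\calN_\eps[V_2]\|_2 \leq C\,\eps^2\,\|V_1-V_2\|_2$ on $B_D$. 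Composing with $\calL_\eps^{-1}$, the total Lipschitz constant of $\calF_\eps$ is $\calO\nat{\eps^{3/2}D+\eps^2}$, and can be forced below $1/2$ by further shrinking $\eps_*$. Banach's fixed-point theorem then delivers a unique $V_\eps\in B_D$.

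\textbf{Main obstacle.} The delicate step is $\calN_\eps$. Its definition carries a formal $\eps^{-8}$ prefactor ($\eps^{-6}$ from $\calP_\eps$ plus $\eps^{-2}$ from the difference quotient), which must be completely cancelled by factors of $\eps$ produced by the cubic vanishing of $\partial_r\psi$ at $r=0$ and by the two-point difference of its arguments. This cancellation has to be carried out pointwise in $\xi$ in such a way that the remaining $\xi$-integral is controlled by the moments in \eqref{integral_gamma}, despite the singular $(\eps\xi)^{-1/2}$ behavior of the auxiliary $\mathsf{L}^\infty$-bound for $\calA_{\eps\xi}$. All the other ingredients --- uniform invertibility of $\calL_\eps$, consistency of the residual, quadraticity of $\calQ_\eps$ --- have already been prepared in the preceding subsections, so the fixed-point step itself is essentially careful $\eps$-and-$\xi$-bookkeeping within the Bochner framework.
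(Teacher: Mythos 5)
Your proposal is correct and follows essentially the same route as the paper: uniform invertibility of $\calL_\eps$ from Corollary \ref{corollary_uniform_invertibility_L_eps} plus the consistency bound of Lemma \ref{lemma_consistency}, the $(\eps\xi)^{-1/2}$ interpolation from \eqref{estimate_A_eta_1}$_1$ paired with the moment \eqref{integral_beta}$_2$ to extract $\eps^{3/2}$ from $\calQ_\eps$, and the mean value theorem with the quadratic bound \eqref{estimate_second_derivative_psi} and moments \eqref{integral_gamma} to cancel the $\eps^{-8}$ prefactor in $\calN_\eps$. The paper's proof carries out exactly this bookkeeping (introducing $Z_{\xi,\eps,i}=\eps^2\xi\,\calA_{\eps\xi}(W_0+\eps^2V_i)$ and splitting the $\xi$-integral at $\xi=1$), and finishes by setting $D:=2C$ and shrinking $\eps_*$ as you describe.
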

\begin{proof}
We show that the operator $\calF_\eps: B_D\to B_D$ is a contractive self-mapping for sufficiently large $D$; the claim is then a direct consequence of the Contraction Mapping Principle. We identify the value of $D$ at the end of this proof and denote by $C>0$ any generic constant that is independent of both $D$ and $\eps$.
\par\underline{\textit{Estimates for the quadratic terms:}} 
For $V_1, V_2\in B_D$ we deduce
\begin{align*}
|\eps^2\,\calQ_\eps [V_2]-\eps^2\,\calQ_\eps [V_1]|
&\leq \eps^2 \int_0^\infty \beta\at{\xi}\,\xi^3 \left(\|\calA_{\eps\xi}V_2\|_\infty+\|\calA_{\eps\xi}V_1\|_\infty\right)\calA_{\eps\xi}^2|V_2-V_1|\dint{\xi}\\
&\leq  \eps^{3/2}\, C\,D \int_0^\infty \beta\at{\xi}\,\xi^{5/2}\calA_{\eps\xi}^2\left|V_2-V_1\right|\dint{\xi}\,,
\end{align*}
from (\ref{estimate_A_eta_1})$_1$, (\ref{estimate_A_eta_pointwise}) and in view of (\ref{estimate_A_eta_1})$_2$,  (\ref{integral_beta})$_2$  we obtain
\begin{align*}
 \|\eps^2\,\calQ_\eps [V_2]-\eps^2\,\calQ_\eps [V_1]\|_2
 &\leq  \eps^{3/2}\, C\,D \int_0^\infty \beta\at{\xi}\,\xi^{5/2}\|\calA_{\eps\xi}^2\left|V_2-V_1\right|\|_2\dint{\xi}\\
 &\leq  \eps^{3/2}\, C\,D\|V_2-V_1\|_2\,.
\end{align*}
Moreover, the choice $V_2:=V$ and $V_1:=0$ implies
\begin{align*}
\|\eps^2\,\calQ_\eps [V]\|_2
\leq  \eps^{3/2}\, C\,D\|V\|_2 
\leq  \eps^{3/2}\, C\,D^2
\end{align*}
for any $V\in B_D$.
\par\underline{\textit{Estimates for the higher order terms:}} %
For $V_1, V_2 \in B_D$  we define $Z_{\xi,\eps,i}:=\eps^2\,\xi\, \calA_{\eps\xi}(W_0+\eps^2\,V_i)$ and derive
 $|Z_{\xi,\eps,2}-Z_{\xi,\eps,1}|=\eps^4\,\xi |\calA_{\eps\xi}(V_2-V_1)|$ from \eqref{estimate_A_eta_pointwise} as well as 
\begin{align*}
\begin{split}
\|Z_{\xi,\eps,i}\|_\infty 
&\leq \begin{cases}
\eps^2\,\xi^{1/2} \bat{ C+\eps^{3/2}\,D} & \text{ for } 0\leq\xi\leq 1\\
\eps^2\,\xi \bat{ C+\eps^{3/2}\,D} & \text{ for } 1<\xi<\infty
\end{cases}
\end{split}
\end{align*}
from (\ref{estimate_A_eta_1})$_1$ and (\ref{estimate_A_eta_1})$_2$. Combining this with (\ref{estimate_A_eta_pointwise}), (\ref{estimate_second_derivative_psi}) and using the meanvalue theorem we obtain the pointwise estimate
\begin{align*}
\big|\eps^2\, \calN_\eps [V_2]-\eps^2\, \calN_\eps [V_1]\big|
&\leq \frac{1}{\eps^6} \int_0^\infty \xi \calA_{\eps\xi}\left|\partial_r\psi\pair{Z_{\xi,\eps,2}}{\xi}-\partial_r\psi\pair{Z_{\xi,\eps,1}}{\xi}\right| \dint{\xi}\\
&\leq \eps^2\, \bat{C+\eps^{3/2}\,D}^2 \int_0^1 \xi^3\,\gamma\at{\xi}\, \calA_{\eps\xi}^2\left|V_2-V_1\right| \dint{\xi}\\
&+\eps^2\, \bat{C+\eps^{3/2}\,D}^2 \int_1^\infty \xi^4\,\gamma\at{\xi}\, \calA_{\eps\xi}^2\left|V_2-V_1\right| \dint{\xi}
\end{align*}
and hence the Lipschitz bound
\begin{align*}
\|\eps^2\,\calN_\eps[V_2]-\eps^2\,\calN_\eps[V_1]\|_2
&\leq  \eps^2\,C\, \bat{C+\eps^{3/2}\,D}^2 \|V_2-V_1\|_2
\end{align*}
thanks to (\ref{estimate_A_eta_1})$_2$ and  (\ref{integral_gamma}). Moreover, setting $V_2:=V$ and $V:=0$   we get
\begin{align*}
\|\eps^2\,\calN_\eps[V]\|_2
\leq \eps^2\,C\, \bat{C+\eps^{3/2}\,D}^2 \|V\|_2
\leq \eps^2\,C\,D\, \bat{C+\eps^{3/2}\,D}^2 
\end{align*}
for all $V\in B_D$.
\par\underline{\textit{Concluding arguments:}} %
Combining all partial results derived so far with Lemma \ref{lemma_consistency} and Corollary \ref{corollary_uniform_invertibility_L_eps} gives
\begin{align*}
\|\calF_\eps[V]\|_2
\leq C+ \eps^{3/2}\,C\,D^2 +\eps^2\,C\,D \bat{C+\eps^{3/2}\,D}^2
\end{align*}
for all $V\in B_D$ as well as
\begin{align*}
\|\calF_\eps[V_2]-\calF_\eps[V_1]\|_2
\leq \left(\eps^{3/2}\,C\,D +\eps^2\,C\, \bat{C+\eps^{3/2}\,D}^2\right)\|V_2-V_1\|_2
\end{align*}
for all $V_1, V_2 \in B_D$. Setting $D:=2\,C$ and choosing $\eps$ sufficently small we finally obtain 
\begin{align*}
F_\eps[V]\|\leq D\,,\qquad \|F_\eps[V_2]-F_\eps[V_1]\|_2\leq \kappa \|V_2-V_1\|_2\,,
\end{align*}
where the contraction number $\kappa\in(0,1)$ can be made arbitrarily small.
\end{proof}
%
%
%
\appendix
%
%
\section{Bochner integrals}
\subsection*{Elements of the general theory}
%
%
We summarize two important results of the Bochner theory for functions that are defined on the real semiaxis $\oointerval{0}{\infty}$ and take values in the separable Hilbert space $\mathsf{L}^2\at{\mathbb{R}}$. For the general theory and the proofs we refer to \cite[chapter 2]{R20} and \cite[chapter 1]{HN16}.
\begin{theorem}[Bochner measurability and Bochner integrability]\label{theorem_bochner}
The following statements are satisfied for any function $F: \pair{0}{\infty}\to \mathsf{L}^2\at{\mathbb{R}}$.
\begin{itemize}
\item[1.] 
$F$ is Bochner measurable if and only if $\xi \mapsto \langle F\at{\xi}, G \rangle_2$ is Lebesgue measurable for all $G\in  \mathsf{L}^2\at{\mathbb{R}}$, where
$\langle \cdot, \cdot \rangle_2$ denotes the inner product on $\mathsf{L}^2\at{\mathbb{R}}$.
\item[2.] 
Let $F$ be Bochner measurable. Then $F$ is Bochner integrable if and only if $\xi\mapsto\|F\at{\,\xi\,}\|_2$ is Lebesgue integrable. In this case we have
\begin{align}\notag
\left\| \int_0^\infty F\at{\xi}\dint{\xi}\right \|_2\leq \int_0^\infty \| F\at{\xi}\|_2\dint{\xi}\,.
\end{align}
\end{itemize}
\end{theorem}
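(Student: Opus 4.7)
The first statement is Pettis's measurability theorem specialized to the separable Hilbert space $\mathsf{L}^2\at{\mathbb{R}}$, and the second is the classical Bochner integrability criterion. The plan is to reduce both assertions to scalar Lebesgue theory by exploiting the separability of $\mathsf{L}^2\at{\mathbb{R}}$ via a countable dense subset $\{G_j\}_{j\in\mathbb{N}}$ and by passing back and forth between $F$ and suitable simple-function approximations.

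For the first assertion the forward direction is immediate: if $F$ is Bochner measurable, then by definition there exist simple functions $F_n:\pair{0}{\infty}\to\mathsf{L}^2\at{\mathbb{R}}$ with $F_n\at{\xi}\to F\at{\xi}$ strongly for almost every $\xi$, and each $\xi\mapsto\langle F_n\at{\xi},G\rangle_2$ is a scalar-valued simple function, hence Lebesgue measurable; the scalar measurability of $\xi\mapsto\langle F\at{\xi},G\rangle_2$ then follows by pointwise passage to the limit. For the converse I would first deduce that $\xi\mapsto\|F\at{\xi}\|_2$ is Lebesgue measurable, since it equals the supremum over the countable family $\{G_j/\|G_j\|_2\}_{j\in\mathbb{N}}$ of the measurable functions $|\langle F\at{\xi},\cdot\rangle_2|$, and by the same token $\xi\mapsto\|F\at{\xi}-u\|_2$ is measurable for every fixed $u\in\mathsf{L}^2\at{\mathbb{R}}$. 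Partitioning $\pair{0}{\infty}$ according to which point of a countable dense sequence $\{u_k\}$ is closest to $F\at{\xi}$, I obtain disjoint measurable sets $A_{n,k}$ such that the simple functions $F_n\at{\xi}:=\sum_k u_k\,\chi_{A_{n,k}}\at{\xi}$ satisfy $F_n\at{\xi}\to F\at{\xi}$ strongly in $\mathsf{L}^2\at{\mathbb{R}}$ for a.e.~$\xi$, which is precisely Bochner measurability.

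For the second assertion the forward implication is essentially built into the definition: a Bochner integrable $F$ is by construction the limit, in the seminorm $\int\|\cdot\|_2\dint{\xi}$, of simple functions, and Fatou's lemma combined with the measurability of $\xi\mapsto\|F\at{\xi}\|_2$ from part~1 yields Lebesgue integrability of the norm together with the displayed inequality. For the converse I would start from the simple approximations $F_n$ constructed in part~1 and pointwise truncate them by redefining $F_n\at{\xi}:=0$ on $\{\xi:\|F_n\at{\xi}\|_2>2\|F\at{\xi}\|_2\}$; the resulting sequence still converges strongly a.e.\ to $F$ and now satisfies the pointwise domination $\|F_n\at{\xi}-F\at{\xi}\|_2\leq 3\|F\at{\xi}\|_2$. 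The scalar dominated convergence theorem then forces $\int_0^\infty\|F_n\at{\xi}-F\at{\xi}\|_2\dint{\xi}\to 0$, which is exactly Bochner integrability of $F$, and the norm inequality transfers from the simple-function triangle inequality to the limit.

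The principal technical obstacle, as usual in this circle of ideas, is the construction of the simple-function approximation in the converse of part~1: separability of $\mathsf{L}^2\at{\mathbb{R}}$ is essential here, since otherwise the partition into small balls around countably many centers would fail to exhaust the essential range of $F$ and the whole strategy would collapse. Once that step is in place, the remainder is routine bookkeeping with scalar measurable and integrable functions.
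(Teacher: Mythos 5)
The paper itself does not prove this theorem: it explicitly presents it as a summary of classical Bochner theory and refers the reader to \cite[Ch.~2]{R20} and \cite[Ch.~1]{HN16} for the arguments. Your sketch is a correct rendition of what those references contain, namely Pettis's measurability theorem (part~1, here simplified because the target $\mathsf{L}^2\at{\mathbb{R}}$ is already separable, so the essential-separable-range condition is automatic) and the classical Bochner integrability criterion (part~2). The converse of part~1 is the only step with genuine content: you correctly observe that weak measurability together with separability gives measurability of $\xi\mapsto\nnorm{F(\xi)-u}_2$ for each fixed $u$, and then the nearest-point partition over a countable dense set $\{u_k\}$ (with a tie-break rule, e.g.\ smallest index) produces the required a.e.\ strong limit of simple functions. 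In part~2 the truncation $\tilde F_n := F_n\cdot\chi_{\{\nnorm{F_n}_2\leq 2\nnorm{F}_2\}}$ is a standard device to obtain a dominating function and is legitimate because the truncation set is measurable by part~1; the final inequality then transfers from simple functions by taking limits. One cosmetic point: in the forward direction of part~2, Fatou is not really needed — the reverse triangle inequality $\babs{\nnorm{F_n}_2-\nnorm{F}_2}\leq\nnorm{F_n-F}_2$ already gives $\int\nnorm{F_n}_2\dint{\xi}\to\int\nnorm{F}_2\dint{\xi}$ — but invoking it does no harm. So the proposal is sound; the difference from the paper is simply that you supply the proof the paper delegates to its references.
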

\begin{theorem}[Dominated Convergence Theorem]\label{theorem_lebesgue_bochner}
Let $(F_n)_{n\in\mathbb{N}}$ be a sequence of Bochner integrable functions $F_n: (0,\infty)\to \mathsf{L}^2\at{\mathbb{R}}$. 
If there exist a function  $F_\infty: (0,\infty)\to \mathsf{L}^2\at{\mathbb{R}}$ and a Lebesgue integrable function $g:(0,\infty) \to \mathbb{R}$ such that 
\begin{align*}
\|F_n\at{\xi}-F_\infty\at{\xi}\|_2 \xrightarrow{\;n\to\infty\;} 0\,,\qquad \|F_n\at{\xi}\|_2 \leq g\at{\xi} 
\end{align*}
holds for almost all $\xi\in\pair{0}{\infty}$, then $F_\infty$ is Bochner integrable and we have
\begin{align*}
\lim_{n\to\infty} \int_0^\infty \|F_n\at{\xi}-F_\infty\at{\xi}\|_2\dint{\xi}=0
\end{align*}
as well as
\begin{align*}
\lim_{n\to\infty} \int_0^\infty F_n\at{\xi}\dint{\xi}=\int_0^\infty F_\infty\at{\xi}\dint{\xi} \qquad \text{ strongly in } \quad \mathsf{L}^2\at{\mathbb{R}}\,.
\end{align*}
\end{theorem}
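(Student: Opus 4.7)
\medskip

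The plan is to reduce the vector-valued statement to the classical scalar Dominated Convergence Theorem by exploiting the separability of $\mathsf{L}^2\at{\mathbb{R}}$ and the characterizations supplied by Theorem \ref{theorem_bochner}. The proof naturally splits into four steps: measurability of $F_\infty$, Bochner integrability of $F_\infty$, convergence of the scalar integrand $\xi\mapsto\|F_n\at{\xi}-F_\infty\at{\xi}\|_2$, and finally strong convergence of the Bochner integrals themselves.

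First I would establish that $F_\infty$ is Bochner measurable. For each fixed $G\in\mathsf{L}^2\at{\mathbb{R}}$, Theorem \ref{theorem_bochner}.1 applied to $F_n$ gives that $\xi\mapsto\nskp{F_n\at\xi}{G}_2$ is Lebesgue measurable, while the Cauchy--Schwarz bound $\nabs{\nskp{F_n\at\xi-F_\infty\at\xi}{G}_2}\leq\|F_n\at\xi-F_\infty\at\xi\|_2\,\|G\|_2$ together with the hypothesis yields pointwise a.e.\ convergence $\nskp{F_n\at\xi}{G}_2\to\nskp{F_\infty\at\xi}{G}_2$. Since a pointwise a.e.\ limit of scalar Lebesgue measurable functions is Lebesgue measurable, the reverse direction of Theorem \ref{theorem_bochner}.1 provides Bochner measurability of $F_\infty$. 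Next, the pointwise inequality $\|F_\infty\at\xi\|_2=\lim_n\|F_n\at\xi\|_2\leq g\at\xi$ holds almost everywhere, so $\xi\mapsto\|F_\infty\at\xi\|_2$ is Lebesgue integrable and Theorem \ref{theorem_bochner}.2 certifies that $F_\infty$ is Bochner integrable.

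For the third step I would treat the scalar function $\varphi_n\at\xi:=\|F_n\at\xi-F_\infty\at\xi\|_2$. It is Lebesgue measurable (as the norm of a difference of Bochner measurable maps), converges to $0$ pointwise a.e.\ by assumption, and is dominated by $2\,g\at\xi$ since $\varphi_n\at\xi\leq\|F_n\at\xi\|_2+\|F_\infty\at\xi\|_2\leq 2\,g\at\xi$. The scalar Dominated Convergence Theorem then gives $\int_0^\infty\varphi_n\at\xi\dint\xi\to 0$, which is precisely the first asserted limit. For the fourth step, linearity of the Bochner integral combined with the norm estimate from Theorem \ref{theorem_bochner}.2 (applied to the Bochner integrable difference $F_n-F_\infty$) yields
\begin{align*}
\Bnorm{\int_0^\infty F_n\at\xi\dint\xi-\int_0^\infty F_\infty\at\xi\dint\xi}_2
=\Bnorm{\int_0^\infty\at{F_n\at\xi-F_\infty\at\xi}\dint\xi}_2
\leq \int_0^\infty\varphi_n\at\xi\dint\xi\,,
\end{align*}
and the right-hand side vanishes in the limit by the previous step.

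The only genuinely delicate point is the measurability step: in a general Banach space one would need Pettis' theorem together with essential separability of the range. Here the obstacle dissolves because $\mathsf{L}^2\at{\mathbb{R}}$ is separable, so the scalar characterization of Theorem \ref{theorem_bochner}.1 transfers measurability across pointwise limits without additional work. The remaining steps are routine reductions to scalar Lebesgue theory and to the norm inequality already contained in Theorem \ref{theorem_bochner}.2.
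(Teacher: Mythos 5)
The paper does not prove Theorem~\ref{theorem_lebesgue_bochner}; it states it as a standard fact and refers to the cited textbooks for the argument, so there is no in-paper proof to compare against. Your four-step reduction to scalar Lebesgue theory is correct and is the standard route: you obtain Bochner measurability of $F_\infty$ from the weak-measurability characterization in Theorem~\ref{theorem_bochner}.1, which works cleanly here because the hypothesis $\|F_n(\xi)-F_\infty(\xi)\|_2\to 0$ fixes a single $G$-independent exceptional null set; you obtain Bochner integrability of $F_\infty$ from the pointwise bound $\|F_\infty(\xi)\|_2\le g(\xi)$ and Theorem~\ref{theorem_bochner}.2; you apply the scalar Dominated Convergence Theorem to $\xi\mapsto\|F_n(\xi)-F_\infty(\xi)\|_2$ with dominating function $2g$; and you conclude strong convergence of the integrals from the triangle inequality in Theorem~\ref{theorem_bochner}.2 applied to the Bochner integrable difference $F_n-F_\infty$. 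No gaps.
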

%
%
\subsection*{Special results}
%
%
We next show that the integral operators in \eqref{operator_B_eps_peridynamics}, \eqref{operators_Q_eps_P_eps_peridynamics}, and \eqref{operator_M_eps_peridynamic} are in fact well-defined in the sense of Bochner integrals. Afterwards we 
characterize the Fourier transform of $\calB_\eps$.
\begin{proposition}[operators from \S\ref{sect:prelim} and \S\ref{section_main_result}]\label{prop:Bochner1}
The operators \mbox{$\calB_\eps,\,\calQ_\eps,\, \calP_\eps ,\, \calM_\eps : \mathsf{L}^2\at{\mathbb{R}}\to \mathsf{L}^2\at{\mathbb{R}}$} are well-defined  in the sense of Bochner integrals for any $\eps>0$. They satisfy
\begin{align*}
\|\calB_\eps V\|_2\leq \|V\|_2+\int_0^\infty \alpha\at{\xi}\,\xi^2 \frac{\| V-\calA_{\eps \xi}^2V\|_2}{\eps^2}\dint{\xi}
\end{align*}
as well as
\begin{align*}
\|\calQ_\eps [V]\|_2\leq \int_0^\infty \beta\at{\xi}\,\xi^3 \|\calA_{\eps\xi}\left(\calA_{\eps \xi} V\right)^2\|_2\dint{\xi}\,,
\quad
\|\calM_\eps V\|_2\leq 2\int_0^\infty \beta\at{\xi}\,\xi^3 \|\calA_{\eps\xi}\left((\calA_{\eps \xi} W_0)(\calA_{\eps \xi} V)\right)\|_2\dint{\xi}
\end{align*}
and
\begin{align*}
\|\calP_\eps [V]\|_2\leq \frac{1}{\eps^6}\int_0^\infty \xi\,\| \calA_{\eps\xi}\partial_r \psi \pair{\eps^2\,\xi\,\calA_{\eps \xi} V}{\xi}\|_2\dint{\xi}\,.
\end{align*}
Moreover, we have $\eps^2\,\calP_\eps[V]\xrightarrow{\;\eps\to 0\;} 0$ for any $V\in\mathsf{L}^2\at{\mathbb{R}}$.
\end{proposition}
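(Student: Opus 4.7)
The plan is to verify, for each of the four integral operators, the two hypotheses of Theorem \ref{theorem_bochner} applied to the respective integrand viewed as a map $\pair{0}{\infty}\to\fspaceL^2\at{\Rset}$. Once Bochner measurability and integrability of the scalar norm are established, the norm bounds displayed in the proposition are immediate from Theorem \ref{theorem_bochner} part 2, while the convergence $\eps^2\calP_\eps[V]\to 0$ will follow from the Dominated Convergence Theorem \ref{theorem_lebesgue_bochner}.

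For \textbf{measurability} I would argue uniformly for all four integrands. By Lemma \ref{properties_A_eta} part 3 the map $\eta\mapsto\calA_\eta W$ is differentiable into $\fspaceL^2\at{\Rset}$ for every $W\in\fspaceL^2\at{\Rset}$, and in particular strongly continuous in $\eta$. Combined with the piecewise continuity of $\alpha,\beta,\gamma$ and the joint continuity of $\partial_r\psi$ from Assumption \ref{assumption_1}, each integrand becomes a piecewise strongly continuous $\fspaceL^2\at{\Rset}$-valued function of $\xi$, and therefore Bochner measurable by Theorem \ref{theorem_bochner} part 1.

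For the \textbf{norm integrability} of $\calB_\eps$, $\calM_\eps$, and $\calQ_\eps$ I would combine the elementary inequalities collected in Lemma \ref{properties_A_eta} with the integrability conditions \eqref{integral_alpha}--\eqref{integral_beta}. Concretely, the triangle inequality and $\|\calA_{\eps\xi}^2 V\|_2\leq\|V\|_2$ give $\|V-\calA_{\eps\xi}^2 V\|_2\leq 2\|V\|_2$, so that the integrand for $\calB_\eps$ has $\fspaceL^2$-norm bounded by $2\alpha\at\xi\xi^2\|V\|_2/\eps^2$, integrable via \eqref{integral_alpha}$_1$. For $\calM_\eps$, the chain $\|\calA_{\eps\xi}((\calA_{\eps\xi}W_0)(\calA_{\eps\xi}V))\|_2\leq\|\calA_{\eps\xi}W_0\|_\infty\|\calA_{\eps\xi}V\|_2\leq\|W_0\|_\infty\|V\|_2$ combined with \eqref{integral_beta}$_1$ does the job. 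For $\calQ_\eps$ I would use $\|\calA_{\eps\xi}(\calA_{\eps\xi}V)^2\|_2\leq\|\calA_{\eps\xi}V\|_\infty\|\calA_{\eps\xi}V\|_2\leq\at{\eps\xi}^{-1/2}\|V\|_2^2$, invoking the $p=2$ case of \eqref{estimate_A_eta_1}, which together with \eqref{integral_beta}$_2$ yields the desired integrability.

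The main obstacle is the operator $\calP_\eps$, because Assumption \ref{assumption_1} only controls $\partial_r^2\psi$ on the interval $|r|\leq 1$. The plan is first to integrate \eqref{estimate_second_derivative_psi} twice, using $\partial_r\psi\pair{0}{\xi}=0$, to obtain the cubic bound $|\partial_r\psi\pair{r}{\xi}|\leq\tfrac{1}{3}\gamma\at\xi|r|^3$ on $|r|\leq 1$. Applying this with $r=\eps^2\xi\calA_{\eps\xi}V\at x$ and using $\|r\|_\infty\leq\eps^{3/2}\xi^{1/2}\|V\|_2$ from Lemma \ref{properties_A_eta} part 1, the constraint $\|r\|_\infty\leq 1$ holds precisely on $\xi\leq\xi_\ast:=(\eps^3\|V\|_2^2)^{-1}$. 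On this range, $\|r^3\|_2\leq\|r\|_\infty^2\|r\|_2\leq\eps^5\xi^2\|V\|_2^3$ gives an integrand $\fspaceL^2$-norm of order $\eps^{-1}\gamma\at\xi\xi^3\|V\|_2^3$, integrable by \eqref{integral_gamma}$_1$. The complementary range $\xi>\xi_\ast$ shrinks to the empty set as $\eps\to 0$, so Theorem \ref{theorem_lebesgue_bochner} applied to $\eps^2\calP_\eps[V]$ delivers the stated limit: the prefactor $\eps^2$ together with the cubic smallness of $\partial_r\psi$ near the origin makes the integrand vanish pointwise in $\xi$, and the dominating function $C\gamma\at\xi\xi^3\|V\|_2^3$ is independent of $\eps$ and $\xi$-integrable.
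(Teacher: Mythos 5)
Your overall strategy matches the paper's: check Bochner measurability via Theorem~\ref{theorem_bochner}, then integrability of the scalar norm $\xi\mapsto\|F_\eps\at\xi\|_2$, and finally appeal to a convergence argument for $\eps^2\calP_\eps[V]$. Your estimates for $\calB_\eps$, $\calM_\eps$ and $\calQ_\eps$ are essentially what the paper uses (the paper spells out only $\calB_\eps$ and $\calP_\eps$ and says the others are similar), and your choice of $\|\calA_{\eps\xi}V\|_\infty\leq\at{\eps\xi}^{-1/2}\|V\|_2$ together with \eqref{integral_beta}$_2$ for $\calQ_\eps$ is indeed the intended route. One small point on measurability: for $\calP_\eps$ the claim that the integrand is \emph{strongly} $\fspaceL^2$-continuous in $\xi$ is not obvious, since it requires passing the composition $\partial_r\psi\pair{\cdot}{\xi}$ through the strong continuity of $\calA_{\eps\xi}V$. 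The paper sidesteps this by writing $\skp{F_\eps\at\xi}{G}_2=\int_\Rset h_\eps\pair{\xi}{x}\dint{x}$ with $h_\eps$ jointly continuous, invoking Fubini, and then using the Pettis-type criterion in Theorem~\ref{theorem_bochner}~(1). That is the safer argument and worth adopting.

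The real gap is in your treatment of $\calP_\eps$. You correctly observe that the cubic bound $\abs{\partial_r\psi\pair{r}{\xi}}\leq\tfrac13\gamma\at\xi\abs{r}^3$ obtained by integrating \eqref{estimate_second_derivative_psi} is, in Assumption~\ref{assumption_1}, only stated for $\abs{r}\leq1$, and you introduce the threshold $\xi_*=\bat{\eps^3\|V\|_2^2}^{-1}$ below which this constraint is satisfied. But your argument then furnishes \emph{no} bound at all on the range $\xi>\xi_*$: you only remark that this set retreats to infinity as $\eps\to0$. This is not enough to establish that $\calP_\eps[V]$ is Bochner integrable (hence an element of $\fspaceL^2\at\Rset$) for a \emph{fixed} $\eps>0$, nor to produce the $\eps$-independent dominating function that Theorem~\ref{theorem_lebesgue_bochner} requires. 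The paper, by contrast, applies the cubic bound uniformly in $\xi$ and obtains the direct estimate $\|\calP_\eps[V]\|_2\leq\eps^{-1}\|V\|_2^3\int_0^\infty\gamma\at\xi\,\xi^3\dint\xi$, from which $\eps^2\calP_\eps[V]\to0$ follows at rate $\calO\at\eps$ with no dominated-convergence step at all. If you prefer to honour the restriction $\abs{r}\leq1$ literally, you must supply an estimate for $\partial_r\psi$ when $\abs{r}>1$ (say, a polynomial growth bound); simply discarding that part of the $\xi$-integral does not close the proof of well-definedness.
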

\begin{proof}
We only discuss $\calB_\eps$ and $\calP_\eps$ in detail. The statements concerning $\calQ_\eps$ and $\calM_\eps$ can be established by similar arguments. 
\par\emph{\ul{Operator $\calB_\eps$}}\,: %
For given  $V\in\mathsf{L}^2\at{\mathbb{R}}$ and $\eps>0$ we define $F_\eps : (0,\infty)\to \mathsf{L}^2\at{\mathbb{R}}$ by
\begin{align*}
F_\eps\at{\xi}:=\alpha\at{\xi}\,\xi^2 \frac{V-\calA_{\eps \xi}^2 V}{\eps^2}
\end{align*}
and observe that the function
\begin{align*}
\xi\mapsto\langle F_\eps\at{\xi}, G \rangle_2
= \frac{\alpha\at{\xi}\,\xi^2}{\eps^2}\left(\langle V, G \rangle_2 - \langle \calA_{\eps \xi}^2 V ,G \rangle_2\right)
\end{align*}
is Lebesgue measurable for any $G\in \mathsf{L}^2\at{\mathbb{R}}$ since $\xi\mapsto\alpha\at{\xi}\, \xi^2$ is at least piecewise continuous 
according to Assumption \ref{assumption_1} and because Lemma \ref{properties_A_eta} ensures that $\xi\mapsto \langle V ,G \rangle_2 - \langle \calA_{\eps \xi}^2 V ,G \rangle_2$  is differentiable with respect to $\xi$. The function $F_\eps$ is thus Bochner measurable thanks to Theorem \ref{theorem_bochner}. Moreover, \eqref{estimate_A_eta_1}$_2$ and \eqref{integral_alpha}$_1$ yield the estimate
\begin{align*}
\int_0^\infty \| F_\eps\at{\xi} \|_2 \dint{\xi}
 \leq \int_0^\infty \frac{\alpha\at{\xi}\,\xi^2}{\eps^2}\left(\|V\|_2+\|\calA_{\eps\xi}^2 V\|_2\right)\dint{\xi}
= 2\, \| V\|_2 \frac{1}{\eps^2} \int_0^\infty \alpha\at{\xi}\,\xi^2 \dint{\xi} 
< \infty\,,
\end{align*}
and this implies the Bochner integrability of $F_\eps$ as well as  the desired bound for $\|\calB_\eps V\|_2$.
\par\emph{\ul{Operator $\calP_\eps$}}\,: %
We now define  $F_\eps : (0,\infty)\to \mathsf{L}^2\at{\mathbb{R}}$ by
\begin{align*}
F_\eps\at{\xi}:=\xi\, \calA_{\eps\xi}\partial_r \psi \pair{\eps^2\,\xi\,\calA_{\eps \xi} V}{\xi}\,,
\end{align*}
fix $G\in\mathsf{L}^2\at{\mathbb{R}}$, and write
\begin{align*}
\langle F_\eps\at{\xi}, G \rangle_2 
&=\int_{\mathbb{R}} h_\eps\pair{\xi}{x} \dint{x}\,,
\end{align*}
where  $h_\eps: \pair{0}{\infty}\times\mathbb{R}\to\mathbb{R}$ is given by
\begin{align*}
h_\eps\pair{\xi}{x}:= \xi \, \partial_r \psi \pair{\eps^2\,\xi\,(\calA_{\eps \xi} V)\at{x}}{\xi}\, (\calA_{\eps\xi}G)\at{x}
\end{align*}
since $\calA_{\eps\xi}$ is self-adjoint. This function $h_\eps$ is continuous by Assumption \ref{assumption_1} and  Lemma \ref{properties_A_eta}, so Theorem \ref{theorem_bochner} combined with Fubini's theorem ensures that $F_\eps$ is Bochner measurable. Moreover, \eqref{estimate_second_derivative_psi}, \eqref{estimate_A_eta_1}, and \eqref{estimate_A_eta_pointwise} guarantee that the estimate
\begin{align*}
\babs{\partial_r \psi\pair{\eps^2\,\xi\,(\calA_{\eps\xi}V)}{\xi}}
\leq\gamma\at{\xi}\,\bat{\eps^2\,\xi\,\calA_{\eps\xi}|V|}^3\leq
\ga\at\xi\,\eps^6\,\xi^3\norm{\calA_{\eps\xi}|V|}_\infty^2\calA_{\eps\xi}\abs{V}
\leq
\ga\at\xi\,\eps^5\,\xi^2\norm{V}_2^2\calA_{\eps\xi}\abs{V}
\end{align*}
holds for any $\xi$ in the sense of functions with variable $x$. Using also \eqref{estimate_A_eta_1} we therefore get
\begin{align*}
|F_\eps\at{\xi}|
\leq\gamma\at{\xi}\,\eps^5 \,\xi^3\,\norm{V}_2^2\,\calA_{\eps\xi}\bat{\calA_{\eps\xi}|V|}\,,\qquad
\|F_\eps\at{\xi}\|_2 
&\leq  \gamma\at{\xi}\,\eps^5 \,\xi^3\,\norm{V}_2^3
\end{align*}
and hence
\begin{align*}
\|\calP_\eps [V]\|_2
 \leq {\eps^{-6}}\int_0^\infty  \|F_\eps\at{\xi}\|_2 \dint{\xi}
\leq \eps^{-1}\,\|V\|_2^3\int_0^\infty  \gamma\at{\xi}\,\xi^3\dint{\xi}\,.
\end{align*}
This implies both the desired estimate and the claimed convergence result for $\calP_\eps$ thanks to \eqref{integral_gamma}.
\end{proof}
\begin{proposition}[Fourier symbol of $\calB_\eps$]\label{prop:Bochner2}
For $\eps>0$ and any $V\in\mathsf{L}^2\at{\mathbb{R}}$ we have
\begin{align*}
\widehat{\calB_\eps V}\at{k}=b_\eps\at{k}\,\widehat{V}\at{k}
\end{align*}
for almost all $k\in\Rset$, where the symbol function $b_\eps$ is given in \eqref{symbol_b_eps_peridynamics}.
\end{proposition}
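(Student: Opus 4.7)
The plan is to use that the Fourier transform $\calF\colon\mathsf{L}^2\at{\mathbb{R}}\to\mathsf{L}^2\at{\mathbb{R}}$ is a bounded linear operator, combined with the fundamental fact that bounded linear operators commute with Bochner integrals. Specifically, if $T\colon X\to Y$ is bounded linear between Banach spaces and $f\colon I\to X$ is Bochner integrable, then $T\circ f\colon I\to Y$ is Bochner integrable and $T\bat{\int_I f}=\int_I T\circ f$. This is a standard consequence of the definition of the Bochner integral via approximation by simple functions.

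The first step is to recall from Proposition \ref{prop:Bochner1} that the function
\begin{align*}
F_\eps\at{\xi}\deq \alpha\at{\xi}\,\xi^2\,\frac{V-\calA_{\eps\xi}^2 V}{\eps^2}
\end{align*}
is Bochner integrable from $\oointerval{0}{\infty}$ to $\mathsf{L}^2\at{\mathbb{R}}$, and that $\calB_\eps V = V + \int_0^\infty F_\eps\at{\xi}\dint{\xi}$. Applying $\calF$ and using its continuity, I would obtain
\begin{align*}
\widehat{\calB_\eps V} = \widehat{V} + \int_0^\infty \widehat{F_\eps\at{\xi}}\dint{\xi}
\end{align*}
as an identity in $\mathsf{L}^2\at{\mathbb{R}}$.

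The second step is to identify $\widehat{F_\eps\at{\xi}}$ pointwise in $k$ for fixed $\xi$. Since $\calA_{\eps\xi}$ is convolution with $\at{\eps\xi}^{-1}\chi_{\eps\xi}$ and has symbol $\operatorname{sinc}\at{\eps\xi k/2}$ by \eqref{symbol_A_eta}, the convolution theorem yields $\widehat{\calA_{\eps\xi}^2 V}\at{k}=\operatorname{sinc}^2\at{\eps\xi k/2}\,\widehat{V}\at{k}$ for almost all $k$, so
\begin{align*}
\widehat{F_\eps\at{\xi}}\at{k} = \alpha\at{\xi}\,\xi^2\,\frac{1-\operatorname{sinc}^2\at{\eps\xi k/2}}{\eps^2}\,\widehat{V}\at{k}\,.
\end{align*}

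The third step is to evaluate the outer Bochner integral pointwise in $k$ by pairing with an arbitrary test function $G\in\mathsf{L}^2\at{\mathbb{R}}$ and invoking Theorem \ref{theorem_bochner} together with Fubini's theorem. The double integral
\begin{align*}
\int_0^\infty\int_{\mathbb{R}} \alpha\at{\xi}\,\xi^2\,\frac{1-\operatorname{sinc}^2\at{\eps\xi k/2}}{\eps^2}\,\widehat{V}\at{k}\,\overline{G\at{k}}\dint{k}\dint{\xi}
\end{align*}
is absolutely convergent thanks to $0\leq 1-\operatorname{sinc}^2\at{y}\leq 1$, the Cauchy--Schwarz inequality, Parseval's identity, and the integrability of $\alpha\at{\xi}\,\xi^2$ from \eqref{integral_alpha}$_1$. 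Interchanging the integrals and using that $G$ is arbitrary identifies $\widehat{\calB_\eps V}\at{k}$ with $b_\eps\at{k}\,\widehat{V}\at{k}$ almost everywhere.

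The only mild obstacle is the Fubini interchange in the third step, but the uniform bound $0\leq 1-\operatorname{sinc}^2\at{y}/\eps^2\leq \xi^2 k^2/12$ (valid for all $y$) together with the same integrability estimate already established in Proposition \ref{prop:Bochner1} provides a dominating majorant, so the exchange is routine.
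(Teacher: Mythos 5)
Your argument is correct and takes a genuinely different route from the paper. The paper proceeds by density: it first proves the identity for $V\in\mathsf{L}^1\at{\mathbb{R}}\cap\mathsf{L}^2\at{\mathbb{R}}\cap\mathsf{BC}\at{\mathbb{R}}$ using the classical integral formula for $\widehat{V}$ and a Fubini interchange on the $(x,\xi)$-double integral with an $\mathsf{L}^1$-based dominant, and then passes to general $V\in\mathsf{L}^2\at{\mathbb{R}}$ via an approximating sequence, Parseval's theorem, and the operator bound for $\calB_\eps$. You instead invoke the abstract fact that the bounded linear map $\calF:\mathsf{L}^2\at{\mathbb{R}}\to\mathsf{L}^2\at{\mathbb{R}}$ commutes with Bochner integrals, which lets you push $\calF$ inside immediately and reduces the problem to (i) computing $\widehat{F_\eps\at{\xi}}$ for fixed $\xi$ via the convolution theorem, and (ii) identifying the resulting Bochner integral pointwise in $k$ by pairing with an arbitrary $G\in\mathsf{L}^2\at{\mathbb{R}}$ and applying Fubini on the $(k,\xi)$-integral. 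Your approach buys you a one-pass argument with no density step; the price is that you need the pointwise identification in $k$, which your test-function argument supplies. Both approaches ultimately rest on a Fubini exchange, just over different pairs of variables.

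One small flaw in the write-up: the dominating majorant proposed in your final sentence, $\at{1-\operatorname{sinc}^2\at{y}}/\eps^2\leq \xi^2 k^2/12$ with $y=\eps\xi k/2$, does not work in this generality because $k^2\,|\widehat{V}\at{k}|\,|G\at{k}|$ need not be integrable for general $V,G\in\mathsf{L}^2\at{\mathbb{R}}$. The correct dominant is the one you already stated in the third step, namely the uniform bound $0\leq 1-\operatorname{sinc}^2\at{y}\leq 1$, which gives $\frac{\alpha\at\xi\,\xi^2}{\eps^2}\,|\widehat{V}\at{k}|\,|G\at{k}|$; this is $\xi$-integrable by \eqref{integral_alpha}$_1$ and $k$-integrable by Cauchy--Schwarz. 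So drop the final sentence and keep the bound from the third step.
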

\begin{proof}
\emph{\ul{Special case}}\,: 
For any $V\in\mathsf{L}^1\at{\mathbb{R}} \cap \mathsf{L}^2\at{\mathbb{R}} \cap \mathsf{BC}\at{\mathbb{R}}$ we get
\begin{align*}
\int_{\mathbb{R}}\int_0^\infty \alpha\at{\xi}\,\xi^2\,V\at{x}\,\mathtt{e}^{\mathtt{i}kx}\dint{\xi}\dint{x}=\int_0^\infty \int_{\mathbb{R}}\alpha\at{\xi}\,\xi^2\,V\at{x}\,\mathtt{e}^{\mathtt{i}kx}\dint{x}\dint{\xi}
= \left( \int_{\mathbb{R}}\alpha\at{\xi}\,\xi^2\dint{\xi}\right) \widehat{V}\at{k}
\end{align*}
by Fubini's theorem since the integrand is continuous with respect to both $\xi$ and $x$. The function $h_\eps : \pair{0}{\infty} \times \mathbb{R}\to\mathbb{R}$ with
\begin{align*}
h_\eps\pair{\xi}{x}:=\alpha\at{\xi}\,\xi^2 (\calA_{\eps\xi}^2V)\at{x}\,\mathtt{e}^{\mathtt{i}kx}
\end{align*}
is also continuous for any given $k$ and \eqref{estimate_A_eta_1}$_2$ implies
\begin{align*}
\int_0^\infty \int_\mathbb{R} |h_\eps\pair{\xi}{x}|\dint{x}\dint{\xi}
\leq \|V\|_1  \int_0^\infty \alpha\at{\xi}\,\xi^2\dint{\xi}
<\infty\,.
\end{align*}
Using Fubini's theorem as well as \eqref{symbol_A_eta} we thus obtain
\begin{align*}
\int_\mathbb{R}\int_0^\infty  h_\eps\pair{\xi}{x}\dint{\xi}\dint{x}
&=\int_0^\infty \alpha\at{\xi}\,\xi^2 \int_\mathbb{R}(\calA_{\eps\xi}^2 V) \at{x}\,\mathtt{e}^{\mathtt{i}kx}\dint{x}\dint{\xi}
=\int_0^\infty \alpha\at{\xi}\,\xi^2 \operatorname{sinc}^2\at{\frac{\eps\,\xi\,k}{2}}\,\widehat{V}\at{k}\dint{\xi}\,,
\end{align*}
and hence
\begin{align*}
\widehat{\calB_\eps V}\at{k}
&=\int_{\mathbb{R}}\left(V\at{x}+\int_0^\infty \alpha\at{\xi}\,\xi^2 \,\frac{V\at{x}-(\calA_{\eps\xi}^2 V)\at{x}}{\eps^2}\dint{\xi}\right) \mathtt{e}^{\mathtt{i}kx}\dint{x}\\
&= \widehat{V}\at{k}+\int_0^\infty \alpha\at{\xi}\,\xi^2 \frac{\widehat{V}\at{k}-\operatorname{sinc}^2\at{\frac{\eps\,\xi\,k}{2}}\widehat{V}\at{k}}{\eps^2}\dint{\xi}=b_\eps\at{k}\,\widehat{V}\at{k}
\end{align*}
for all $k\in\mathbb{R}$.
\par\emph{\ul{Approximation argument}}\,: %
Now let $V\in\mathsf{L}^2\at{\mathbb{R}}$ be arbitrary and $(V_n)_{n\in\mathbb{N}}\subset \mathsf{L}^1\at{\mathbb{R}} \cap \mathsf{L}^2\at{\mathbb{R}}\cap \mathsf{BC}\at{\mathbb{R}}$ be an approximating sequence. We already proved that $\widehat{\calB_\eps V_n} = b_\eps\widehat{V}_n$ holds for any $n$ and $\hat{V}_n \xrightarrow{\;n\to\infty\;} \hat{ V}$ is provided by Parseval's theorem. Moreover, Theorem \ref{prop:Bochner1} and \eqref{estimate_A_eta_1}$_2$ ensure
\begin{align*}
\frac{1}{\sqrt{2\,\pi}}\,\|\widehat{\calB_\eps V}_n-\widehat{\calB_\eps V}\|_2
&=\|\calB_\eps V_n-\calB_\eps V\|_2
\leq\left( 1+ \frac{2}{\eps^2}\int_0^\infty \alpha\at{\xi}\,\xi^2\dint{\xi}\right) \|V_n-V\|_2\xrightarrow{\;n\to\infty\;}0\,,
\end{align*}
so the claim follows immediately.
\end{proof}
%
%

\end{document}